\newcommand{\R}{{\mathbb  R}}
\newcommand{\N}{{\mathbb  N}}
\newcommand{\ci}[1]{_{ {}_{\scriptstyle #1}}}
\chardef\mathlig@atcode\count255
\def\actively#1#2{\begingroup\uccode`\~=`#2\relax\uppercase{\endgroup#1~}}
\def\mathlig@gobble{\afterassignment\mathlig@next@cmd\let\mathlig@next= }
\def\mathlig@delim{\mathlig@delim}
\def\mathlig@defcs#1{\expandafter\def\csname#1\endcsname}
\def\mathlig@let@cs#1#2{\expandafter\let\expandafter#1\csname#2\endcsname}
\def\mathlig@appendcs#1#2{\expandafter\edef\csname#1\endcsname{\csname#1\endcsname#2}}
\def\mathlig#1#2{\mathlig@checklig#1\mathlig@end\mathlig@defcs{mathlig@back@#1}{#2}\ignorespaces}
\def\mathlig@checklig#1#2\mathlig@end{%
 \expandafter\ifx\csname mathlig@forw@#1\endcsname\relax
 \expandafter\mathchardef\csname mathlig@back@#1\endcsname=\mathcode`#1%
 \mathcode`#1"8000\actively\def#1{\csname mathlig@look@#1\endcsname}%
 \mathlig@dolig#1\mathlig@delim
\fi
\mathlig@checksuffix#1#2\mathlig@end
}
\def\mathlig@checksuffix#1#2\mathlig@end{%
\ifx\mathlig@delim#2\mathlig@delim\relax\else\mathlig@checksuffix@{#1}#2\mathlig@end\fi
}
\def\mathlig@checksuffix@#1#2#3\mathlig@end{%
\expandafter\ifx\csname mathlig@forw@#1#2\endcsname\relax\mathlig@dosuffix{#1}{#2}\fi
\mathlig@checksuffix{#1#2}#3\mathlig@end
}
\def\mathlig@dosuffix#1#2{%
\mathlig@appendcs{mathlig@toks@#1}{#2}%
\mathlig@dolig{#1}{#2}\mathlig@delim
}
\def\mathlig@dolig#1#2\mathlig@delim{%
 \mathlig@defcs{mathlig@look@#1#2}{%
 \mathlig@let@cs\mathlig@next{mathlig@forw@#1#2}\futurelet\mathlig@next@tok\mathlig@next}%
 \mathlig@defcs{mathlig@forw@#1#2}{%
  \mathlig@let@cs\mathlig@next{mathlig@back@#1#2}%
  \mathlig@let@cs\checker{mathlig@chck@#1#2}%
  \mathlig@let@cs\mathligtoks{mathlig@toks@#1#2}%
  \expandafter\ifx\expandafter\mathlig@delim\mathligtoks\mathlig@delim\relax\else
  \expandafter\checker\mathligtoks\mathlig@delim\fi
  \mathlig@next
 }%
 \mathlig@defcs{mathlig@toks@#1#2}{}%
 \mathlig@defcs{mathlig@chck@#1#2}##1##2\mathlig@delim{%
  \ifx\mathlig@next@tok##1%
   \mathlig@let@cs\mathlig@next@cmd{mathlig@look@#1#2##1}\let\mathlig@next\mathlig@gobble
  \fi
  \ifx\mathlig@delim##2\mathlig@delim\relax\else
   \csname mathlig@chck@#1#2\endcsname##2\mathlig@delim
  \fi
 }%
%
 \ifx\mathlig@delim#2\mathlig@delim\else
  \mathlig@defcs{mathlig@back@#1#2}{\csname mathlig@back@#1\endcsname #2}%
 \fi
}%
\mathchardef\ordinarycolon\mathcode`\:
\def\vcentcolon{\mathrel{\mathop\ordinarycolon}}
\numberwithin{equation}{section}
\theoremstyle{plain}
\newtheorem{theo}{Theorem}[section]
\newtheorem{lem}[theo]{Lemma}
\newtheorem{prop}[theo]{Proposition}
\theoremstyle{definition}
\theoremstyle{remark}
\newtheorem*{ex*}{Example}
\theoremstyle{remark}
\newtheorem*{exs*}{Examples}
\theoremstyle{remark}
\newtheorem*{rem*}{Remark}
\newtheorem{rem}[theo]{Remark}
\newtheorem*{rems*}{Remarks}
\title[Moment Representations]{Moment Representations of Exceptional $X_1$ Orthogonal Polynomials}
\author{Constanze~Liaw}
\address{CASPER and Department of Mathematics, Baylor University, One Bear Place \#97328,      
 Waco, TX  76798, USA}
\email{Constanze$\underline{\,\,\,}$Liaw@baylor.edu}
\urladdr{http://sites.baylor.edu/constanze$\underline{\,\,\,}$liaw/}
\thanks{This work was supported by a grant from the Simons Foundation (\#426258, Constanze Liaw).}
\author{Jessica~Stewart~Kelly}
\address{Department of Mathematics, Christopher Newport University, 1 Avenue of the Arts, Newport News, VA 23606}
\email{Jessica$\underline{\,\,\,}$Kelly@cnu.edu}
\author{John~Osborn}
\address{Department of Mathematics, Baylor University, One Bear Place \#97328,      
 Waco, TX  76798, USA}
\email{John$\underline{\,\,\,}$Osborn@baylor.edu}
\begin{document}

\date{\today}
\subjclass[2010]{33C45, 34B24, 42C05, 44A60.}
\keywords{exceptional orthogonal polynomials, moments.}

\begin{abstract}
We generalize the representations of $X_1$ exceptional orthogonal polynomials through determinants of matrices that have certain adjusted moments as entries. We start out directly from the Darboux transformation, allowing for a universal perspective, rather than dependent upon the particular system (Jacobi or Type of Laguerre polynomials). We include a recursion formula for the adjusted moments and provide the initial adjusted moments for each system.

Throughout we relate to the various examples of $X_1$ exceptional orthogonal polynomials. We especially focus on and provide complete proofs for the Jacobi and the Type III Laguerre case, as they are less prevalent in literature.

Lastly, we include a preliminary discussion explaining that the higher co-dimension setting becomes more involved. The number of possibilities and choices is exemplified, but only starts, with the lack of a canonical flag.
\end{abstract}

\maketitle
\section{Introduction}
The study of exceptional orthogonal polynomials arose as the result of extending exactly solvable and quasi-exactly solvable potentials in quantum mechanics beyond the Lie-algebraic setting \cite{Dutta-Roy, grandati11, grandati12,Quesne, Quesne2}.  The Laguerre and Jacobi exceptional polynomial systems of codimension one were first introduced in 2009 as an extension of Bochner's classification theorem for the classical orthogonal polynomials \cite{KMUG1}.  At that time, the approach to exceptional orthogonal polynomial systems was as state-preserving solutions to second-order Sturm-Liouville-type problems.  Quesne initially identified the relationship between the exceptional orthogonal polynomials and the Darboux transforms \cite{Quesne}.  This connection with the Darboux transform has allowed for exceptional polynomial systems of higher codimension to be obtained.  Ultimately, a new Bochner-like classification theorem for the exceptional systems has been proven \cite{KMUG9, KMUG3, KMUG5, Classification, Odake-Sasaki1}.  

Of interest to mathematicians are the various properties of the exceptional orthogonal systems as they relate to classical orthogonal systems, as well as the asymptotic and interlacing properties of the zeros, recursion formulas, and the spectrum of the exceptional systems \cite{Atia-Littlejohn-Stewart, Duran, KMUG5, KMUG6, Zeros, Ho,HoOdakeSasaki, HoSasakiZeros2012, LLMS, LLS, LLSW}. 

Here we use the Darboux transform and its relation to the exceptional Laguerre (Types I, II, and III) and Jacobi polynomial systems to obtain the corresponding exceptional polynomials via the Gram-Schmidt method.  In the case of classical orthogonal polynomials it is a well-established perspective to view the polynomials as the result of applying Gram-Schmidt to the \emph{flag sequence} $\left\{1, x, x^2, \ldots\right\}$, as is outlined in \cite[Chapter 1.3]{Chihara}. Some of the authors \cite{Liaw-Osborn} used these ideas to derive two representations for the Type I $X_1$-Laguerre polynomials in terms of their moments through using determinants. An adaptation and generalization of this method leads us to our main result, which relies solely on the moment functions and modified weights of the exceptional orthogonal polynomial systems.  Our results are universal in the sense that they can be applied to $X_1$ orthogonal polynomial systems (independent of being ``Laguerre'' or ``Jacobi'' specific). Due to structural reasons, there are no $X_1$-Hermite polynomials \cite{Hermite1}.

Other representations exceptional orthogonal polynomial involve Wronskians, and sometimes pseudo-Wronskian, determinants of classical orthogonal polynomials, see e.g.~\cite{Duran, Hermite1}.

The idea of this paper hinges on the following simple observations regarding the essential characteristics of exceptional orthogonal polynomials. First of all, the exclusion of eigenfunctions with certain degrees is caused by particular rational-function coefficients in the differential expression. When we apply the differential expression to eigenfunctions we must (at least) cancel any denominators introduced by these rational-function coefficients. This idea leads to what we call the \emph{exceptional condition}, one of the central players in the theory. In this paper we primarily consider $X_1$ polynomials, so that this denominator consists of a linear polynomial. We call the root of this polynomial the \emph{exceptional root}, and denote it by $\xi$. Our work is suspended on the ansatz that writing the Taylor expansion of the exceptional polynomials around $\xi$ inherits beneficial properties, see \cite[Section 5]{Liaw-Osborn}. There it was also noticed that adjusting moments by replacing the integrand $x^l$ by $(x-\xi)^l$ drastically simplifies matters.

We first gather some preliminary information about the Darboux transform and its relations to exceptional orthogonal polynomials (Section \ref{s-prelims}). For subsequent comparison, we introduce the Jacobi and the Type III Laguerre case in more detail. 
In Section \ref{Exceptional Condition}, we focus on a universal expression for the exceptional condition as it was obtained recently in \cite{Classification}. We write their condition in terms of the basic functions of the theory, and relate the condition to our examples. The flag sequence, which after the Gram-Schmidt algorithm returns the exceptional orthogonal polynomials, is the topic of Section \ref{flag}. We state the determinantal representation in Theorem \ref{s-determinantal} of Section \ref{theo: determinantal}. Like in \cite{Liaw-Osborn}, we relate the degree $n$ exceptional polynomial in terms of a determinant of an $(n+1)\times (n+1)$-matrix. The first row of the matrix comes from the exceptional condition, while the second through last row contains certain adjusted moments. We then, in Section \ref{s-moments}, present a recursion formula to compute these adjusted moments. We observe a curious fact: the moment representations for the Type I and II Laguerre polynomials only differ in the exceptional condition. The computation of the initial adjusted moments is deferred to the Appendix \ref{App:AppendixA}. In Section \ref{s-Xm}, we include some preliminary observations on the flags and recursive moment formulas for $X_m$ orthogonal polynomials when $m>1$, mainly so as to indicate some difficulties we expect to encounter when extending determinantal representations to the higher co-dimension setting.  For example, flags have been explored for
$m=1$ see \cite{KMUG6}, whereas the Darboux transform is used when $m>1$.

\subsection{Notation} Most of our notation leans on the standards used in the field of exceptional polynomials. There the general trend is that the classical objects are denoted by letters without ``$\,\,\,\widehat \,\,\,\,$". For example, $T^\alpha$ represents the classical Laguerre differential expression/operator; $L_n^\alpha(x)$, the classical Laguerre polynomial of degree $n$; and $W^{(\alpha, \beta)}(x)$, for the classical Jacobi weight. Their exceptional counterparts are denoted by the respective letters but with the ``$\,\,\,\widehat \,\,\,\,$". Here a subscript is used to refer to the codimension and superscripts I through III specify with which type of exceptional Laguerre we are dealing with, e.g.~$\widehat T^{I,\alpha}_m$ stands for the Type I $X_m$-Laguerre differential operator. When we talk more generally about a weight, differential expression or a general polynomial (and not specifically about either Laguerre or Jacobi) we do not include the parameters $\alpha$ and/or $\beta$. For example we use $W$, $T$ or $p_n$ for the classical objects, and $\widehat W$, $\widehat T$ or $\widehat p_n$ when we consider the exceptional counterparts.  For the reader's convenience we include the other notation in Table \ref{tab-not}.

\begin{table}
\begin{tabular}{|c|p{5 in}|}
\hline
$\eta(x)$ & Polynomial function that occurs in the natural operator\\ \hline
$s(x)$ & Linear function that occurs in the natural operator\\ \hline
$\xi$ &  The exceptional root; that is the root of $b(x)$ or, equivalently, the root of $\phi(x)$ or that of $\eta (x)$ after the $\alpha$-shift\\ \hline
$\mathcal E_n$ & 
Span of the first $n$ exceptional orthogonal polynomials\\ \hline
$\mathcal F_n$ & 
Set of polynomials of degree $\le n$ satisfying the exceptional condition\\ \hline 
$v_k$ & Flag elements $v_k(x)=(x-\xi)^k$\\ \hline
$c_{n,i}$ &   Coefficients of the Taylor expansion $\widehat y_n(x) =\sum_{i=0}^{n}c_{n,i}(x-\xi)^i$ of the exceptional polynomial around the exceptional root\\ \hline
$\widetilde \mu_m$ & Adjusted moments $\widetilde \mu_m=\int_{I} (x-\xi)^m \widehat W(x) dx$\\ \hline
\end{tabular}

\

\caption{Other notation.}
\label{tab-not}
\end{table}
\section{Preliminaries}\label{s-prelims}

We first recall how the Darboux transform may be used to generate exceptional orthogonal polynomial systems.  For further reading on the relationship of the Darboux transform and exceptional orthogonal polynomial systems, see \cite{Classification,KMUG9,KMUG3,KMUG5}, upon which the following exposition is based.  

Suppose $T[y]$ is a second-order differential operator with rational coefficients; that is
\begin{align}
T[y]&=p(x)y''(x)+q(x)y'(x)+r(x)y(x)\,.
\end{align}  

Define the following quasi-rational functions
\begin{align}
	P(x)&=\exp\left(\int \frac{q(x)}{p(x)}\,dx\right)\,,\\
	W(x)&=\frac{P(x)}{p(x)}\,,\\
	R(x)&=r(x)W(x)\,.
\end{align}

By multiplying the eigenvalue equation $T[y]=\lambda y$ by $W(x)$, the Sturm-Liouville type equation 
\[
(Py')'+Ry=\lambda W y
\] is formed.  Thus, we refer to $W(x)$ as the weight function associated with $T$.  

For $T$ and a quasi-rational function $\phi(x)$, $\phi(x)$ is called a \textit{quasi-rational eigenfunction}  if 
\begin{equation}
	T[\phi]=\lambda \phi\, \quad \lambda \in \mathbb C\,.
\end{equation}  

In order to create the operator associated with the exceptional orthogonal polynomials, we first use the following decomposition proposition to rewrite $T$ as a composition of two first-order operators.
\begin{prop}\cite[Proposition 3.5]{Classification}\label{prop: Decomp}
	For a second-order difference operator $T[y]$ having rational coefficients, let $\phi(x)$ be a quasi-rational eigenfunction of $T$ with eigenvalue $\lambda$, and let $b(x)$ be an arbitrary, non-zero rational function.  Define rational functions 
	\begin{align}
		w &=\frac{\phi'}{\phi}\,,\\
		\widehat b & = \frac{p}{b}\,,\\
		\widehat w & = -w -\frac{q}{p}+\frac{b'}{b}
	\end{align} and first-order operators $A$ and $B$ by
	\begin{align}\label{eq: AandB}
		A[y]=b(y'-wy) & \quad \mbox{ and } \quad B=\widehat b(y'-\widehat wy)\,.
	\end{align} 
	With $A$ and $B$ as above, $T$ has a rational factorization of the form $T=BA+\lambda$.  
\end{prop}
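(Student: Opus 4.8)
The plan is to verify the factorization $T = BA + \lambda$ by direct computation, expanding the right-hand side using the definitions of $A$, $B$, $w$, $\widehat b$, and $\widehat w$ supplied in the statement. First I would compute $A[y] = b(y' - wy)$ and then apply $B$ to the result, that is, form $B[A[y]] = \widehat b\bigl((A[y])' - \widehat w\, A[y]\bigr)$. Expanding $(A[y])' = b'(y' - wy) + b(y'' - w'y - wy')$ and substituting $\widehat b = p/b$, the factor $b$ cancels against $1/b$, leaving an expression in $y$, $y'$, $y''$ with coefficients built from $p$, $q$, $b$, $w$, $w'$, $\widehat w$. The goal is to show this equals $p y'' + q y' + (r - \lambda)y$, so that adding $\lambda y$ recovers $T[y]$.

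The second-derivative coefficient is immediately $\widehat b \cdot b = p$, as desired. For the $y'$ coefficient I would collect the terms $\widehat b\bigl(b' - bw - b\widehat w\bigr) = p\bigl(b'/b - w - \widehat w\bigr)$, and then substitute $\widehat w = -w - q/p + b'/b$; the $-w$, the $b'/b$, and the $w$ cancel in pairs, leaving $p \cdot (q/p) = q$. This is the clean part. The main obstacle is the $y$ coefficient: one must show that
\[
\widehat b\bigl(-b'w - bw' + bw\widehat w\bigr) = p\Bigl(-\frac{b'}{b}w - w' + w\widehat w\Bigr) = r - \lambda.
\]
Substituting $\widehat w$ again, the bracket becomes $-\frac{b'}{b}w - w' + w\bigl(-w - \tfrac{q}{p} + \tfrac{b'}{b}\bigr) = -w' - w^2 - \tfrac{q}{p}w$, so the claim reduces to $p\bigl(-w' - w^2 - \tfrac{q}{p}w\bigr) = r - \lambda$, i.e. $-p\,w' - p\,w^2 - q\,w = r - \lambda$. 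Here is where the hypothesis that $\phi$ is a quasi-rational eigenfunction, $T[\phi] = \lambda\phi$, enters decisively: with $w = \phi'/\phi$ we have $w' = \phi''/\phi - (\phi'/\phi)^2 = \phi''/\phi - w^2$, hence $-p\,w' - p\,w^2 - q\,w = -p\,\phi''/\phi - q\,\phi'/\phi = -(T[\phi] - r\phi)/\phi = r - \lambda$. This closes the identification.

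I expect the bookkeeping in the $y$-coefficient — keeping the Riccati-type substitution straight and correctly invoking the eigenfunction equation to convert $\phi''/\phi$ into lower-order data — to be the one genuinely substantive step; everything else is cancellation. Note also that $b$ never needs to be anything other than a fixed nonzero rational function: it drops out of the second-order term, and in the lower-order terms it only appears through the logarithmic-derivative combination $b'/b$, which is precisely engineered to cancel in both the $y'$ and $y$ coefficients. So no assumption on $b$ beyond rationality and non-vanishing is used, consistent with the statement.
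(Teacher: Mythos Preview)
Your computation is correct: the direct expansion of $BA[y]$ with the given substitutions does reproduce $py'' + qy' + (r-\lambda)y$, and your use of the Riccati identity $w' = \phi''/\phi - w^2$ together with the eigenfunction equation $p\phi'' + q\phi' + r\phi = \lambda\phi$ is exactly the right way to close the $y$-coefficient. Note, however, that the paper does not supply its own proof of this proposition; it is quoted from \cite[Proposition~3.5]{Classification} as background and then used without argument. So there is no in-paper proof to compare against, but your verification is a clean and self-contained justification of the cited result.
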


Using the factorization of $T$ and the definitions of $A$ and $B$ given in Proposition \ref{prop: Decomp}, we will define a new operator, called the \textit{partner operator}, to be $\widehat T:=AB+\lambda$.  The rational Darboux transformation maps $T$ to $\widehat T$. Operator $\widehat T$ will also be a second-order differential operator with rational coefficients; that is
\begin{align}\label{eq: standardform}
	\widehat T[y]&=p(x)y''(x)+\widehat q(x)y'(x)+\widehat r(x)y(x)\,.
\end{align} Note that the coefficient of the second-order term is the same for both the original and partner operators.  Additionally, another Sturm-Liouville type equation is induced.

\begin{prop}\cite[Proposition 3.6]{Classification}\label{prop: Coeff}
Suppose that $T$ and $\widehat T$ are second-order differential operators with rational coefficients which are related via a rational Darboux transformation.  Then $T$ and $\widehat T$ have the same second-order coefficients, while first- and zero-order coefficients $q,\widehat q, r, \widehat r\in \mathbb Q$, and the quasi-rational weights $W(x)$ and $\widehat W(x)$ are related by
\begin{align}
\label{def: q-hat}	\widehat q &=q+p'-\frac{2pb'}{b}\,,\\ 
	\widehat r & =-p(\widehat w'+\widehat w^2)-\widehat q \widehat w \\
	&=r+q'+wp'-\frac{b'}{b}(q+p')+\left(2\left(\frac{b'}{b}\right)^2-\frac{b''}{b}+2w'\right)p\,,\mbox{ and }\\
	\widehat W & = \frac{pW}{b^2}=\frac{P(x)}{b^2}\,.\label{def: WeightFunc}
\end{align}
\end{prop}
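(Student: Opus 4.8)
The plan is to verify the four assertions by expanding the composition $\widehat T=AB+\lambda$ into the standard form \eqref{eq: standardform} and matching coefficients term by term, throughout replacing $\widehat b$ and $\widehat w$ by their values $p/b$ and $-w-q/p+b'/b$ from Proposition~\ref{prop: Decomp}. Starting from $B[y]=\widehat b\,y'-\widehat b\widehat w\,y$ and applying $A[z]=b(z'-wz)$, one reads off the second-order coefficient of $AB$ as $b\widehat b=b\cdot(p/b)=p$; since the constant $\lambda$ contributes only to the zero-order term, this already gives the first assertion, that $T$ and $\widehat T$ share their leading coefficient. Collecting the coefficient of $y'$ in $AB$ yields $b\widehat b'-p\widehat w-pw$, and substituting $b\widehat b'=p'-pb'/b$ and $p\widehat w=-pw-q+pb'/b$ makes all but $q+p'-2pb'/b$ cancel, which is \eqref{def: q-hat}.

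For the zero-order coefficient I would carry out the same expansion, differentiating the product $\widehat b\widehat w$ and regrouping. The coefficient of $y$ in $AB$ comes out in the form $-p(\widehat w'+\widehat w^2)-\widehat q\,\widehat w$; this is exactly the shape the analogous computation forces on the coefficient of $y$ in $BA$, with $(b,w)$ in the roles of $(\widehat b,\widehat w)$, and since $BA+\lambda=T$ must return the coefficients $p,q,r$ this structural symmetry is a convenient check on the algebra. Passing from this to the second, fully explicit formula for $\widehat r$ is then a matter of substituting $\widehat w=-w-q/p+b'/b$ and $\widehat q=q+p'-2pb'/b$ and simplifying, using at the key step the Riccati-type relation $p(w'+w^2)+qw+r=\lambda$ — which is precisely $T[\phi]=\lambda\phi$ rewritten via $w=\phi'/\phi$, since $\phi'=w\phi$ gives $\phi''=(w'+w^2)\phi$ — to convert the remaining $w$-dependence into the term $r$ appearing in the answer. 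Rationality of all four coefficients is then immediate: $q$ and $r$ are rational by hypothesis, and $w$ is rational because $\phi$ is quasi-rational, so $\widehat q$ and $\widehat r$, being rational expressions in $p,q,r,b,w$ and their derivatives, are rational functions as well.

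Finally, the weight identity \eqref{def: WeightFunc} I would get straight from the definition $\widehat W=\widehat P/p$ with $\widehat P=\exp(\int\widehat q/p\,dx)$: dividing \eqref{def: q-hat} by $p$ and integrating gives $\int\widehat q/p\,dx=\int q/p\,dx+\log p-2\log b$ up to an additive constant, so $\widehat P$ is a constant multiple of $Pp/b^{2}$; normalizing that constant to $1$ (as with $P$ itself) yields $\widehat W=\widehat P/p=P/b^{2}$, and since $W=P/p$ this equals $pW/b^{2}$.

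The step I expect to be the real obstacle is the zero-order coefficient: one has to expand $(\widehat b\widehat w)'$ honestly, keep careful track of all the $b'/b$, $b''/b$, and $w'$ contributions, and invoke the eigenvalue equation at exactly the right moment to turn the leftover $w$-dependent expression into $r$ plus the stated rational function of $p,q,b$. Everything else is short; this one is pure bookkeeping, and the symmetry observation — that the same identity applied to $BA$ has to reproduce $T$ — is what I would lean on to avoid sign errors.
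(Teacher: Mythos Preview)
The paper does not supply its own proof of this proposition: it is quoted verbatim from \cite[Proposition~3.6]{Classification} and used as background in the preliminaries, so there is no in-paper argument to compare your attempt against. That said, your direct-computation approach --- expand $AB$ using the definitions of $A$, $B$, $\widehat b$, $\widehat w$ from Proposition~\ref{prop: Decomp}, match coefficients against the standard form, and recover the weight relation by integrating $\widehat q/p$ --- is exactly the natural verification and is sound. One small point to watch: the coefficient of $y$ in $AB$ is indeed $-p(\widehat w'+\widehat w^{2})-\widehat q\,\widehat w$, but $\widehat T=AB+\lambda$, so strictly $\widehat r$ picks up an additive $\lambda$; this constant is then absorbed when you invoke the Riccati identity $p(w'+w^{2})+qw+r=\lambda$ to pass to the second displayed formula containing $r$, so the bookkeeping is consistent provided you track it through both expressions rather than only the first.
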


We are interested in the Darboux transform applied to classical Bochner systems; in particular, the systems of Laguerre and Jacobi. When $T$ is defined to be one of these classical systems and $\phi$ is carefully chosen, the partner operator will produce an exceptional polynomial system operator. 

\subsection{General $X_m$ Expression}
The moment representations will be approached in the general case (without reliance on the specifics of the Laguerre or Jacobi setting) and will rely solely on the information provided via the Darboux transform and Bochner systems.  Therefore, the differential expression for $X_m$ orthogonal polynomial system will be defined by $\widehat T[\widehat y]$, with the weight function given by equation \eqref{def: WeightFunc}.  Recall that $\xi$ is the exceptional root. (That is, $\xi$ is the root of the denominator function $b(x)$ of the weight $\widehat W(x)$.)  Later, when we restrict ourselves to $m=1$, $b(x)$ will be of degree two and have one repeated root.  

Since we assume that $T$ is a classical Bochner operator and $\phi$ has been carefully chosen to produce an $X_m$ orthogonal polynomial operator, the differential equation $\widehat \ell[\widehat y]=\widehat \lambda \widehat y$ will be satisfied by a sequence of polynomials $\widehat y=\left\{\widehat y_n\right\}_{n\in \mathbb N_0\backslash A}$, where $\widehat y_n$ is of degree $n$ and $A$ is a finite set of dimension $m$, and corresponding sequence of eigenvalues $\widehat \lambda=\{\widehat \lambda_n\}_{n\in \mathbb N_0\backslash A}$.  We mention that the eigenvalues for the classical orthogonal systems and exceptional orthogonal systems are \emph{not} necessarily equal as shifting may occur under the Darboux transform.  On an open interval, $I=(a,b)$, this sequence of polynomials will satisfy the orthogonality relation
\begin{align}\label{e-orthogonality}
	\langle \widehat y_n,\widehat y_k\rangle\ci{\widehat W}
	= \int_I \widehat y_n \widehat y_k\widehat W\,dx=
	K_n \delta_{n,k}
\end{align}
where $\delta_{n,k}$ is the Kronecker-$\delta$ symbol (equal to 1 when $n=k$ and 0 when $n\neq k$).
Of course, the interval $I=(0,\infty)$ for Laguerre and $I=(-1,1)$ when we work with Jacobi systems.

\subsection{$X_m$-Laguerre Expression}\label{section: X1-Laguerre}
Here we provide a brief insight into the Type I, II, and III exceptional Laguerre orthogonal polynomial systems.  A more rigorous look at the Darboux transform applied to the classical Laguerre expression may be found in \cite{KMUG9}.  Recall the classical Laguerre differential operator 
\begin{align}\label{eq: Laguerre}
T^\alpha[y]=xy''+(-x+\alpha+1)y'\,,
\end{align} and corresponding weight function \[W^\alpha(x)=x^{\alpha}e^{-x}\,.\]  The classical Laguerre polynomials are shape-invariant under the factorizations:
\begin{align*}
T^\alpha&=B^\alpha A^\alpha\\
\widehat T^{\alpha+1}&=A^\alpha B^\alpha+1\,,
\end{align*} where 
\begin{align*}
A^\alpha(y)&=y' \quad \mbox{and}\\
B^\alpha(y)&=xy'+(\alpha+1-x)y\,. 
\end{align*}

The quasi-rational eigenfunctions and eigenvalues of $T^\alpha[y]$ are:
\[
\begin{array}{ll}
\phi_1(x)=L_m^\alpha(x),&\lambda=-m\,,\\
\phi_2(x)=x^{-\alpha}L_m^{-\alpha}(x),&\lambda=\alpha-m\,,\\
\phi_3(x)=e^xL_m^\alpha(-x),&\lambda=\alpha+1+m\,, \mbox{ and }\\
\phi_4(x)=x^{-\alpha}e^xL_m^\alpha(-x),&\lambda=m+1\,,
\end{array}
\]
where $m \in \mathbb N_0$.  The factorizations corresponding to each of these eigenfunctions have been studied, see \cite{KMUG9, Zeros}.  It has been shown that $\phi_1$ with $m=0$ corresponds to a state-deleting transformation and corresponds to the classical Laguerre polynomials.  For $m>0$, the eigenfunctions corresponding to $\phi_1$ yield singular operators, which means that no new families of orthogonal polynomials arise.  The family associated with $\phi_4$ is state-adding and therefore, the resulting orthogonal polynomials are not of codimension $m$.  The factorizations of $\phi_2$ and $\phi_3$ result in new orthogonal polynomials---in fact, these factorizations respectively produce the Type I and Type II exceptional orthogonal polynomials of codimension $m$.  For further reading regarding the properties of the Type I and Type II exceptional Laguerre polynomial systems, see  \cite{LLMS}.

The Type III exceptional Laguerre polynomials $\{\widehat L_{m,n}^{III,\alpha}(x)\}_{n=0 \text{ or }n >m}$ are a class of Laguerre-type orthogonal polynomials which were extensively studied in \cite{LLMS}.  We will focus on the Type III exceptional operator for several of our examples in this paper. 

There is another rational factorization of the classical Laguerre expression $T^\alpha[\cdot]$, which yields the Type III second-order expression.
Let 
\begin{align*}
A_{m}^{III, \alpha}(y)&=xL_m^{-\alpha}(-x)y'-(m+1)L_{m+1}^{-\alpha-1}(-x)y\,,\mbox{ and }\\
B_{m}^{III, \alpha}(y)&=\frac{y'}{L_m^{-\alpha}(-x)}\,.
\end{align*}  The classical Laguerre operator may be written as
\[
T^\alpha=B_{m}^{III,\alpha}A_{m}^{III,\alpha}+m+1\,,
\]
and the Darboux transformation associated with the above factorizations yields the Type III operator 
\begin{align*}
\nonumber \widehat T_{m}^{III, \alpha}&=A_{m}^{III, \alpha+1 }B_{m}^{III, \alpha+1}+m-\alpha.
\end{align*} 
That is, explicitly, we have
\begin{align}
 \label{eq: Type III DE} 
  \widehat T_{m}^{III, \alpha}[y]&=
  -xy''+\left(-1+\alpha+x+2x\frac{(L_m^{-\alpha-1}(-x))'}{L_m^{-\alpha-1}(-x)}\right)y'-\alpha y\,.
\end{align} 

The corresponding weight for the Type III case is 
\begin{equation}\label{Type III Weight}
\widehat W^{III,\alpha}_{m}(x)=\frac{x^\alpha e^{-x}}{\left(L_m^{-\alpha-1}(-x)\right)^2}\,.
\end{equation}  The Type III eigenvalue equation will have orthogonal polynomial solutions on $(0,\infty)$ if and only if $-1<\alpha<0$.  In fact, the associated Type III differential equation will have a polynomial solution $y(x)=\widehat L^{III, \alpha}_{m,n}(x)$ of degree $n$ for $n=0$ and for $n\geq m+1$; that is, solutions of degrees $\{1,2,\ldots, m\}$ are missing. 

We may write the Type III exceptional Laguerre polynomials using the Darboux transformation
\begin{align}\label{eq: TypeIII Lag Poly}
\widehat L^{III,\alpha}_{m,n}(x) &=
\begin{cases}
-A^{III, \alpha+1}_{m}[L^{\alpha+1}_{n-m-1}(x)], &n=m+1,m+2,\ldots\\
1 & n = 0\,.
\end{cases}
\end{align}

\subsection{$X_m$-Jacobi Expression}\label{section: X1-Jacobi}
Again, we provide a brief outline of the exceptional Jacobi orthogonal polynomial systems, and a more rigorous look at the Darboux transform applied to the classical Jacobi expression may be found in \cite{Zeros}. For $\alpha, \beta>-1$, 
\begin{align}\label{eq: ClassicalJacobi}
T^{\alpha, \beta}[y]&=(1-x^2)y''+(\beta-\alpha-(\alpha+\beta+2)x)y'
\end{align} is the classical Jacobi differential operator.  For \[\alpha, \beta >-1,\quad \alpha+1-m-\beta\notin\left\{0,1,\ldots, m-1\right\},\quad \text{and}\quad \text{sgn}(\alpha+1-m)=\text{sgn}(\beta),\] define 
\begin{align*}
A^{\alpha, \beta}_m[y]&= (1-x)P_m^{(-\alpha,\beta)}y'+(m-\alpha)P_m^{(-\alpha,\beta)}y\,,\\
B^{\alpha, \beta}_m[y]&=\frac{(1+x)y'+(1+\beta)y}{P_m^{(-\alpha,\beta)}}\,,
\end{align*} where $P_m^{(\alpha, \beta)}$ is the classical Jacobi polynomial of degree $m$.  It follows that 
\begin{align*}
T^{\alpha, \beta}[y]&=B^{\alpha, \beta}_mA^{\alpha, \beta}_m y-(m-\alpha)(m+\beta +1)y
\end{align*} and the exceptional Jacobi operator is defined as
\begin{align}
\notag
\widehat T^{\alpha, \beta}_m[y]&= A^{\alpha+1,\beta-1}_mB^{\alpha+1, \beta-1}_m y-(m-\alpha -1)(m+\beta)y\\
 & =T^{\alpha, \beta}[y]+(\alpha-\beta-m+1)my-(\log P_m^{(-\alpha-1,\beta-1)})'\left(\beta(1-x)y+(1-x^2)y'\right)
 \label{e-XmJacobi}
\end{align}

For $\alpha, \beta >-1$ and $n\geq m$, the $X_m$-Jacobi polynomial of degree $n$ can be written as
\begin{align*}
\widehat P_{m,n}^{(\alpha, \beta)}(x)& = \frac{(-1)^{m+1}}{\alpha+1+j}A^{\alpha+1,\beta-1}_m\left[P_j^{(\alpha+1,\beta-1)}(x)\right]\,, \quad j=n-m\geq 0\,.
\end{align*} Note that the exceptional operator extends the classical Jacobi operator; that is \[\widehat T^{\alpha, \beta}_0[y]=T^{\alpha, \beta}[y]\,.\]  The $X_m$-Jacobi polynomials $\{\widehat P_{m,n}^{(\alpha, \beta)}\}_{n\geq m}$ are orthogonal on (-1,1) with respect to the weight function  \[\widehat W_m^{\alpha, \beta}(x)=\frac{(1-x)^\alpha (1+x)^\beta}{\left(P_m^{(-\alpha-1,\beta-1)}(x)\right)^2}\,.\]

\section{Exceptional Condition}\label{Exceptional Condition}
The most noticeable difference between a classical orthogonal polynomial expression, as classified by Bochner, and the exceptional orthogonal polynomial expressions is that the coefficient functions for the first- and zero-order terms are no longer polynomial.  From Proposition \ref{prop: Coeff}, we see that the coefficient functions for any second-order partner operator formed via a rational Darboux transformation will have denominators containing powers of $b(x)$ and $\phi(x)$.  

We turn our attention to our particular case, where we are working with $\widehat T$, an exceptional polynomial operator.  By \cite[Definition 7.1, ii-c]{Classification}, $f(x)p(x)\widehat W(x)\rightarrow 0$ at the endpoints of $(a,b)$ for every polynomial $f(x)$.  Consequently, the operator $\widehat T$ will be polynomially regular and thus, semi-simple \cite[Remark 4.10]{Classification}.  In other words, since we seek polynomial solutions to the associated eigenvalue problem, these non-polynomial coefficients require a specific structure condition for the remaining polynomials; that is, ``cancellation'' must occur in order to have polynomial eigenfunctions. The coefficient functions which are non-polynomial form the \textit{exceptional term}, and the specific structure induced on solutions that is referred to as the \textit{exceptional condition}.  Polynomials of every degree cannot satisfy both the exceptional condition and form a maximal invariant subspace under $\widehat T$.  Therefore, we do not have a full sequence of polynomial eigenfunctions for the exceptional operators---that is $A$ is ensured to be non-empty.

In order to find the exceptional condition which characterizes the exceptional polynomial systems we introduce an additional setting in which we can consider the exceptional orthogonal polynomial operators.  We say that for any two second-order operators having rational coefficients, $T$ and $\widehat T$ are \textit{gauge-equivalent} if there exists a rational function $\sigma$ such that \[\sigma T=\widehat T \sigma\,.\]  By \cite[Proposition 2.5, Theorem 5.4]{Classification}, every exceptional operator will be gauge equivalent to a natural operator.  A natural operator is a second-order operator of the form
\begin{equation}\label{def: naturalform}
py''+\left(\frac{p'}{2}+s-\frac{2p\eta'}{\eta}\right)y'+\left(\frac{p\eta''}{\eta}+\left(\frac{p'}{2}-s\right)\frac{\eta'}{\eta}\right)y\,,
\end{equation} where $p$ is a second-degree polynomial, $\eta$ is a polynomial, and $s$ is a linear function.  It is the case that the polynomial $p$ is the same regardless of whether $\widehat T$ is written in standard form \eqref{eq: standardform} or in natural gauge form \eqref{def: naturalform}.  Obviously, if the standard form of $\widehat T$ is known (that is, $b$ is known), by setting the coefficients to be equal, one can find the polynomial $s$. Rather, we would like to approach the task of finding $s$ without knowing $b$.  To do this, we will utilize \cite[Corollary 5.25]{Classification} which states (in paraphrased form) that the exceptional term for $\widehat T$ in the natural gauge is given by
\begin{equation}\label{def: exceptionalterm}
\frac{2p\eta'y'-\left(p\eta''+\displaystyle{\frac{p'\eta'}{2}}-s\eta'\right)y}{\eta}\,.
\end{equation}  Finding the linear polynomial $s$ given the exceptional terms from the two representations of $\widehat T$ will be the focus Subsection \ref{Linear Poly}.

\subsection{Finding the Linear Polynomial $s$}\label{Linear Poly}
We now turn our attention to finding the linear polynomial $s$, which is found in \eqref{def: naturalform}.

While it is clear that $\eta$ should be the polynomial part of the quasi-rational eigenfunction $\phi$ (that initiates the Darboux transform), it seems less obvious what the function $s\in \mathcal{P}_1$ would be. Here $\mathcal{P}_1$ denotes the polynomials of degree less than or equal to one. Of course, one could extract $s$ by comparing the coefficient of $y'$ in $\widehat T$ to $\widehat q$ in \eqref{def: q-hat} after having computed everything. But we found it beneficial to express $s$ directly from $\eta$ and the factorization gauge.

\begin{lem}\label{l-s}
	The linear polynomial $s = q + p'/2 + 2p (\eta'/\eta - b'/b).$
\end{lem}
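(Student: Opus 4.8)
The plan is to read off $s$ by matching the coefficient of $y'$ in the two descriptions of $\widehat T$. In the standard form \eqref{eq: standardform} this coefficient is $\widehat q$, which by Proposition \ref{prop: Coeff} (see \eqref{def: q-hat}) equals $q + p' - 2pb'/b$. In the natural gauge form \eqref{def: naturalform} of $\widehat T$ --- with the \emph{same} second-degree polynomial $p$ (as observed in the discussion preceding the lemma, $p$ is unchanged when one passes to the natural gauge) and with $\eta$ the polynomial part of $\phi$ --- this coefficient is $\frac{p'}{2} + s - \frac{2p\eta'}{\eta}$. Setting the two equal, $\frac{p'}{2} + s - \frac{2p\eta'}{\eta} = q + p' - \frac{2pb'}{b}$, and solving for $s$ yields $s = q + \frac{p'}{2} + 2p\bigl(\frac{\eta'}{\eta} - \frac{b'}{b}\bigr)$, which is the asserted identity. (That the right-hand side lies in $\mathcal{P}_1$ is then automatic, since $s$ is by construction the linear polynomial of \eqref{def: naturalform}.)

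The algebra above is routine; the substantive point is the legitimacy of identifying the $y'$-coefficients of the two presentations of $\widehat T$. First I would invoke \cite[Proposition 2.5, Theorem 5.4]{Classification} to place $\widehat T$ in its natural gauge, and then verify that the accompanying gauge function does not disturb the first-order coefficient --- it disturbs neither the second-order coefficient $p$ nor, once $\eta$ is taken to be the polynomial part of $\phi$, the displayed coefficient of $y'$. Concretely, if the gauge relation is written $M_\sigma N = \widehat T M_\sigma$ for a rational $\sigma$ and a natural operator $N$, conjugation shifts the first-order coefficient by a term of the form $-2p\,\sigma'/\sigma$, and the point to be checked is that this contribution is already absorbed into $\eta$, leaving no stray $\sigma'/\sigma$. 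This is exactly the content of ``setting the coefficients to be equal,'' and I expect it to be the main obstacle of the proof.

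As an alternative route that sidesteps discussing the gauge directly, one can compare weights. The natural operator \eqref{def: naturalform} is formally self-adjoint against a weight proportional to $\eta^{-2}p^{-1/2}\exp\!\bigl(\int s/p\bigr)$, whereas $\widehat W = P/b^2 = b^{-2}\exp\!\bigl(\int q/p\bigr)$ by \eqref{def: WeightFunc}. Equating these (up to the gauge factor, which in the relevant normalization drops out), taking logarithmic derivatives, and multiplying through by $p$ returns $s = q + \frac{p'}{2} + 2p\bigl(\frac{\eta'}{\eta} - \frac{b'}{b}\bigr)$ once more. I would present this computation as a cross-check on the formula and as the place where the precise role of the gauge normalization is pinned down.
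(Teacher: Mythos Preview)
Your proposal is correct and takes essentially the same approach as the paper: equate the $y'$-coefficient $\frac{p'}{2}+s-\frac{2p\eta'}{\eta}$ of the natural form \eqref{def: naturalform} with $\widehat q=q+p'-\frac{2pb'}{b}$ from Proposition~\ref{prop: Coeff} and solve for $s$. The paper's proof consists of exactly that one-line computation; your additional paragraphs about the gauge factor $\sigma$ and the weight-comparison cross-check are not present there---the paper simply treats \eqref{def: naturalform} as another presentation of $\widehat T$ and defers the fact that $s\in\mathcal P_1$ to \cite[Section~5]{Classification}.
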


From this formula, it is not immediately clear that $s\in \mathcal{P}_1$ in general.  However, this is exactly the topic of \cite[Section 5]{Classification}. Below we include the values for $s$ for the Type I, II, and III Laguerre and for the Jacobi case.

\begin{proof}
	We set the coefficient of $y'$ in \eqref{def: naturalform} equal to $\widehat q$ in Proposition \ref{prop: Coeff}
	\[
	-\frac{2\eta'p}{\eta} + \frac{p'}{2} +s
	=q+p'-\frac{2b'p}{b}.
	\]
	Solving for $s$ yields the lemma.
\end{proof}

In Table \ref{tab-one} we include a table for the choices of $\eta$ and $s$ for the three types of Laguerre examples. We took the liberty of already including all the shifts.  As required, we observe that $s\in \mathcal{P}_1$ in all cases.

We exemplify how we these expressions play out in the case of the Type III Laguerre case.

\begin{ex*}
	First recall that the coefficients of the classical Laguerre expression \eqref{eq: Laguerre} are $p(x) = -x$, $q(x) = x-\alpha-1$ and $r(x)\equiv 0$. The Type III Laguerre orthogonal polynomial system is derived from the classical Laguerre system by using the quasi-rational eigenfunction $\phi_4(x) = x^{-\alpha} e^{x} L^{-\alpha}_m(-x)$, so that
	$
	\widetilde\eta(x): = L^{-\alpha}_m(-x).
	$
	In \cite{LLMS} we used the factorization gauge $b(x) = xL^{-\alpha}_m(-x)$. We also use the shift $\alpha\mapsto \alpha+1$. This shift can be seen from the $\alpha+1$ superscript in the partner operator $$-\widehat T ^{\textit{III},\alpha}_m = A^{\textit{III},\alpha+1}_m\circ B^{\textit{III},\alpha+1}_m +m-\alpha.$$
	
	We compute
	\[
	\frac{\widetilde\eta'(x)}{\widetilde\eta(x)} = \frac{(L^{-\alpha}_m(-x))'}{L^{-\alpha}_m(-x)}
	\qquad
	\text{and}
	\qquad
	\frac{b'(x)}{b(x)} = \frac{1}{x}+\frac{(L^{-\alpha}_m(-x))'}{L^{-\alpha}_m(-x)}\,,
	\]
	so that the expression in Lemma \ref{l-s}
	\[
	q + \frac{p'}{2} + 2p \left(\frac{\widetilde\eta'}{\widetilde\eta} - \frac{b'}{b}\right)
	=
	x-\alpha-1-\frac{1}{2}-2x\left(-\frac{1}{x}\right) = x-\alpha+\frac{1}{2}\,.
	\]
	
	And after the shift $\alpha\mapsto\alpha+1$ we obtain
	\[
	\eta(x) = L^{-\alpha-1}_m(-x)
	\qquad
	\text{and}
	\qquad
	s(x) = x-\alpha-\frac{1}2.
	\]
	
	We verify that the coefficient of $y'$ is correct by comparing
	\begin{align*}
	\left(\frac{p'}{2}+s-\frac{2p\eta'}{\eta}\right)y'
	&=
	\left(-\frac{1}2 +x-\alpha -\frac{1}2 - 2x \frac{(L^{-\alpha-1}_m(-x))'}{L^{-\alpha-1}_m(-x)}\right)y'\\
	&=
	\left(x-\alpha -1 - 2x \frac{(L^{-\alpha-1}_m(-x))'}{L^{-\alpha-1}_m(-x)}\right)y'
	\end{align*}
	with the $X_m$ expression \eqref{eq: Type III DE} for Type III Laguerre.
	
We also compute the coefficient of the zero-order term to ensure that this choice for $s$ produces an equivalent expression in standard form.  Using $\eta=L_m^{-\alpha-1}(-x)$ and the coefficient for $y$ from \eqref{def: naturalform}, we have the following calculation, which relies on the fact that  $\eta=L_m^{-\alpha-1}(-x)$ is a solution to the classical Laguerre differential equation $ T [\eta]=m \eta$, where $ T $ is defined in \eqref{eq: Laguerre}:
\begin{align*}
\frac{\left(p\eta''+\displaystyle{\frac{p' \eta'}{2}}-s\eta'\right)}{\eta} & =\frac{-x\left(L_m^{-\alpha-1}(-x)\right)''+\left(-\displaystyle{\frac{1}{2}}-x+\alpha+\displaystyle{\frac{1}{2}}\right)\left(L_m^{-\alpha-1}(-x)\right)'}{L_m^{-\alpha-1}(-x)}\\
&=\frac{-x\left(L_m^{-\alpha-1}(-x)\right)''+(-x+\alpha)\left(L_m^{-\alpha-1}(-x)\right)'}{L_m^{-\alpha-1}(-x)}\\
&=\frac{ m L_m^{-\alpha-1}(-x)}{L_m^{-\alpha-1}(-x)}\\
&=m.
\end{align*}  Note that the coefficient of the zero-order term we found using the natural operator and $s$ differs from the coefficient in the standard form given in \eqref{eq: Type III DE} by a constant.  This is a result of the shifting of eigenvalues which occurs when the Darboux transformation is applied.  We are not concerned by this discrepancy as the structure of the operator remains the same.
\end{ex*}

\begin{table}\scriptsize
\begin{tabular}{| l | l | l | l | l | l |}
\hline& $\phi(x)$&$\eta(x)$&$b(x)$& Shift&$s(x)$\\\hline
\text{Type I Lag.}&$e^x L^{\alpha}_m(-x)$&$L^{\alpha}_m(-x)$&$L^{\alpha}_m(-x)$&$\alpha\mapsto\alpha-1$&$x-\alpha-1/2$\\\hline
\text{Type II Lag.}&$x^{-\alpha}L^{-\alpha}_m(x)$&$L^{-\alpha}_m(x)$&$xL^{-\alpha}_m(x)$&$\alpha\mapsto\alpha+1$&$x-\alpha-1/2$\\\hline
\text{Type III Lag.}&$x^{-\alpha} e^xL^{-\alpha}_m(-x)$&$L^{-\alpha}_m(-x)$&$xL^{-\alpha}_m(-x)$&$\alpha\mapsto\alpha+1$&$ x-\alpha-1/2$\\\hline
Jacobi& $(1-x)^{-\alpha}\cdot$& $P_m^{(-\alpha, \beta)}(x)$ & $(1-x)\cdot$ & $\alpha\mapsto\alpha+1$ & $\beta-\alpha$\\
&$\quad {P_m^{(-\alpha, \beta)}}(x)$&&$\quad P_m^{(-\alpha, \beta)}(x)$&$\beta\mapsto\beta-1$& $\quad -(\alpha+\beta+1)x$\\ \hline
\end{tabular}

\

\caption{Choices of $\eta$ and $s$ for the natural operator in equation \eqref{def: naturalform}.}\label{tab-one}
\end{table}

\begin{ex*}
The Jacobi expression, $\phi(x)=(1-x)^{-\alpha}P^{(-\alpha,\beta)}_m(x)$, is the quasi-rational eigenfunction is. Using Lemma \ref{l-s} together with the function $\eta(x) = P^{(-\alpha,\beta)}_m(x)$, the gauge $b(x) = (1-x)P^{(-\alpha,\beta)}_m(x)$ and the shifts $\alpha\mapsto\alpha+1$ and $\beta \mapsto \beta-1$ yield
\[
s(x)
=
\beta-\alpha - (\alpha+\beta+1)x.
\]

We verify that the coefficient of $y'$ is correct by computing
	\begin{align*}
	\left(\frac{p'}{2}+s-\frac{2p\eta'}{\eta}\right)y'
	&=
	\left(-x+\beta-\alpha -(\alpha+\beta+1)x - 2(1-x^2) \frac{(P^{(-\alpha-1,\beta-1)}_m(x))'}{P^{(-\alpha-1,\beta-1)}_m(x)}\right)y'\\
	&=
	\left(\beta-\alpha -(\alpha+\beta+2)x - 2x \frac{(P^{(-\alpha-1,\beta-1)}_m(x))'}{P^{(-\alpha-1,\beta-1)}_m(x)}\right)y'\\
	\end{align*}
	with the $X_m$ expression for Jacobi given by \eqref{eq: ClassicalJacobi} and \eqref{e-XmJacobi}.
\end{ex*}
\section{The Flag}\label{flag}
In this section, we characterize the subspace spanned by the first $n$ of the $X_1$-exceptional orthogonal polynomials as those polynomials are satisfying the exceptional condition 
	\begin{equation}\label{eq: exceptional condition}
	\left[2p\eta'y'-\left(p\eta''+\frac{p'\eta'}{2}-s\eta'\right)y\right]\bigg|_{x=\xi}=0\,.
	\end{equation}	
Recall that for the exceptional orthogonal systems of codimension one, $\xi$ represents the root of $\eta$ (that is the exceptional root).  The first $m$ $X_1$-exceptional orthogonal polynomials will be those of degree less than or equal to $m$ in the case of the Type I and Type II $X_1$-Laguerre or $X_1$-Jacobi polynomial systems.  For the Type III $X_1$-Laguerre polynomials system, the first $m$ polynomials will be those of degree 0 or between 2 and $m$, inclusively.

%
In preparation for Lemma \ref{lemma} below, we present two definitions.  We present the definitions as applicable for the Type I and II Laguerre and Jacobi systems.  Modifications required for the Type III Laguerre are noted in parentheses.  Let $\mathcal P_n$ denote the set of polynomials of degree less than or equal to $n$ 
and define the span of the first $n$ exceptional orthogonal polynomials 
\[
\mathcal E_n:=\text{span}\left\{\widehat y_j: j=1,\ldots, n\right\},
\] where $\widehat y_j$ is defined as in Table \ref{tab-two}.
\begin{table}
\begin{tabular}{|c|c|l|}
			\hline
			\multicolumn{1}{ |c  }{\multirow{2}{*}{Exceptional Laguerre} } &
			\multicolumn{1}{ |c| }{Type I } & $\widehat y_j=\widehat L_{1,j}^{I,\alpha}$ \\ 
			\cline{2-3}
			& \multicolumn{1}{ |c| }{Type II } & $\widehat y_j=\widehat L_{1,j}^{II,\alpha}$ \\ 
			\cline{2-3}
			& \multicolumn{1}{ |c| }{Type III } & $\widehat y_j=\left\{\begin{array}{ll} \widehat L_{1,j}^{III,\alpha} & j\geq 2\\
			1 & j=0\end{array} \right.$\\
			\hline
			Exceptional Jacobi& &$\widehat y_j=\widehat P_{1,j}^{(\alpha, \beta)}$ \\
			\hline
		\end{tabular}

\

\

\caption{In the definition of $\mathcal{E}_n=\text{span}\left\{\widehat y_j: j=1,\ldots, n\right\}$ as the first $n$ exceptional polynomials we have $\widehat y_j$.}\label{tab-two}
	\end{table}
	
Further we define $$\mathcal F_n:=\left\{p \in \mathcal P_n: p\text{ satisfies } \eqref{eq: exceptional condition}\right\}.$$
\begin{lem}
	\label{lemma}
	The sets $ \mathcal E_n = \mathcal F_n $ for all $n\in \N$.
\end{lem}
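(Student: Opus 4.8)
The plan is to prove the two inclusions $\mathcal E_n \subseteq \mathcal F_n$ and $\mathcal F_n \subseteq \mathcal E_n$ separately, using a dimension count to conclude equality. First I would show $\mathcal E_n \subseteq \mathcal F_n$. Each exceptional polynomial $\widehat y_j$ is a polynomial eigenfunction of $\widehat T$ (up to gauge equivalence, of a natural operator of the form \eqref{def: naturalform}), so applying $\widehat T$ to $\widehat y_j$ must again produce a polynomial. Since the only non-polynomial part of \eqref{def: naturalform} applied to $\widehat y_j$ is the exceptional term \eqref{def: exceptionalterm}, which has denominator $\eta$, the numerator must vanish at every root of $\eta$. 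In the $X_1$ setting $\eta$ has the single (simple) root $\xi$, so this is exactly the condition that the bracket in \eqref{eq: exceptional condition} vanish at $x=\xi$. Hence each $\widehat y_j \in \mathcal F_n$, and since $\mathcal F_n$ is a linear subspace, $\mathcal E_n = \operatorname{span}\{\widehat y_j : j=1,\dots,n\} \subseteq \mathcal F_n$. One subtlety to handle carefully is the Type III case, where the relevant degrees are $\{0\}\cup\{2,\dots,n\}$ rather than $\{1,\dots,n\}$; I would note that the constant polynomial $1$ trivially satisfies \eqref{eq: exceptional condition} since every term carries a factor $y'$ or is multiplied out so that (one should check) the bracket evaluates to zero, which follows from the zero-order coefficient computation shown in the Type III example above.

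Next I would establish the reverse inclusion via dimensions. The condition \eqref{eq: exceptional condition}, evaluated at the fixed point $x=\xi$, is a single linear functional $\Lambda$ on the vector space $\mathcal P_n$: explicitly $\Lambda(y) = 2p(\xi)\eta'(\xi)\,y'(\xi) - \bigl(p(\xi)\eta''(\xi) + \tfrac12 p'(\xi)\eta'(\xi) - s(\xi)\eta'(\xi)\bigr)y(\xi)$, a linear combination of the evaluation functionals $y \mapsto y(\xi)$ and $y \mapsto y'(\xi)$. So $\mathcal F_n = \ker \Lambda$ has dimension either $n+1$ or $n$, depending on whether $\Lambda$ is identically zero on $\mathcal P_n$. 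For the non-degenerate systems (Types I, II Laguerre and Jacobi), I would argue $\Lambda \not\equiv 0$: if $\Lambda$ vanished on all of $\mathcal P_n$ then every polynomial of degree $\le n$ would be an eigenfunction-candidate, contradicting the fact established earlier in the excerpt that $A$ is non-empty (polynomials of every degree cannot simultaneously satisfy the exceptional condition and form a maximal invariant subspace). Thus $\dim \mathcal F_n = n$. On the other side, $\dim \mathcal E_n = n$ as well, since $\widehat y_1, \dots, \widehat y_n$ are polynomials of distinct degrees $1, \dots, n$ (respectively the Type III degrees), hence linearly independent. Two subspaces of the same finite dimension, one contained in the other, must be equal.

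The main obstacle I anticipate is pinning down that $\Lambda \not\equiv 0$ cleanly and uniformly across the four examples without simply computing $p(\xi), \eta'(\xi), \eta''(\xi), s(\xi)$ in each case — that is, giving a conceptual reason. The cleanest route is probably: if $\Lambda \equiv 0$ on $\mathcal P_n$ for all $n$, then the exceptional term \eqref{def: exceptionalterm} has a numerator divisible by $\eta$ as a polynomial identity in $y$, which would mean $\widehat T$ (in the natural gauge) has polynomial coefficients and hence is a classical Bochner operator, contradicting that $A \ne \emptyset$. Since $\xi$ is a simple root of $\eta$ in the $X_1$ case, divisibility by $\eta$ of the numerator is equivalent to $\Lambda \equiv 0$, closing the argument. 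I would also double-check the degree bookkeeping in the Type III case and the claim that $1 \in \mathcal F_n$ there, since the exceptional term must genuinely vanish at $\xi$ when applied to the constant; the computation in the Type III example (showing the zero-order coefficient of the natural operator applied to $\eta$ equals $m$, a polynomial) is the relevant sanity check, though for $y \equiv 1$ one needs the bracket itself, namely $-\bigl(p(\xi)\eta''(\xi) + \tfrac12 p'(\xi)\eta'(\xi) - s(\xi)\eta'(\xi)\bigr) = 0$, i.e.\ that $\xi$ is also a root of $p\eta'' + \tfrac12 p'\eta' - s\eta'$, which holds because $\xi$ is a root of $\eta$ and $\eta$ satisfies the classical equation. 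I would verify this identity holds as part of the setup rather than case by case.
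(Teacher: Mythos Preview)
Your approach is essentially the same as the paper's: both argue that $\mathcal E_n \subseteq \mathcal F_n$ (because $\widehat T[f]$ is a polynomial for $f\in\mathcal E_n$, forcing the exceptional term's numerator to vanish at $\xi$) and then conclude equality by a dimension count. The only difference is level of detail: the paper simply asserts that $\mathcal E_n$ and $\mathcal F_n$ are ``clearly vector spaces of equal dimension $n$'' and leaves it at that, whereas you unpack the dimension count for $\mathcal F_n$ via the linear functional $\Lambda$ and worry about verifying $\Lambda\not\equiv 0$ and the Type~III constant case. Those concerns are legitimate refinements, not a different strategy.
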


The proof is analogous to the proof for \cite[Lemma 2.1]{Liaw-Osborn}, but the exceptional condition is now replaced by a universal one.  We include the argument for the convenience of the reader.

\begin{proof} 
	Since $\mathcal E_n$ and $\mathcal F_n$ are clearly vector spaces of equal dimension $n$, it suffices to show that $ \mathcal E_n \subseteq \mathcal F_n $.
	
	To see this, take $f \in \mathcal E_n$.  Then $f$ is a linear combination of the first $n$ exceptional polynomials.  So $f \in \mathcal P_n$.
	Further, $\widehat T [f]$ is polynomial, and so is the expression \eqref{eq: exceptional condition} for $y = f$.  It follows that $f \in \mathcal F_n$.
\end{proof}
With the exceptional root $\xi$, we define degree $k$ polynomials
\begin{align}\label{e-flag}
v_k(x) := (x-\xi)^k
\quad
\text{for }k\ge 2.
\end{align}

\begin{lem}\label{l-flag}
	The sequence of polynomials $\{v_1, v_2, v_3, \hdots\}$ forms a flag for $\widehat T$. 
\end{lem}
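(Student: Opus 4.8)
The plan is to read off the statement from Lemma~\ref{lemma} together with one short computation. Recall that to say $\{v_1,v_2,v_3,\dots\}$ is a \emph{flag} for $\widehat T$ means precisely that the successive spans $V_n:=\spa\{v_1,\dots,v_n\}$ exhaust the $\widehat T$-invariant subspaces $\mathcal E_1\subset\mathcal E_2\subset\cdots$, i.e.\ $V_n=\mathcal E_n$ for all $n$, so that Gram-Schmidt applied to the sequence (in the inner product $\langle\cdot,\cdot\rangle_{\widehat W}$ of \eqref{e-orthogonality}) returns the exceptional polynomials $\widehat y_n$ up to normalization. Here $v_1$ is the first exceptional polynomial (of degree one in the Type~I, II Laguerre and the Jacobi cases, and $v_1=\widehat y_0=1$ in the Type~III Laguerre case, where the admissible degrees are $0,2,3,\dots$). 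By Lemma~\ref{lemma} we have $\mathcal E_n=\mathcal F_n$, and in the proof of that lemma it was recorded that $\dim\mathcal F_n=n$. So it suffices to exhibit, for each $n$, $n$ linearly independent elements of $\mathcal F_n$ among $v_1,\dots,v_n$.

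The one computation is that in the $X_1$ situation the exceptional condition \eqref{eq: exceptional condition} constrains only $y(\xi)$ and $y'(\xi)$. Indeed, in codimension one $\eta$ is a polynomial of degree one with root $\xi$, so $\eta(x)=\eta'(\xi)(x-\xi)$, $\eta''\equiv0$ and $\eta'(\xi)\neq0$; dividing \eqref{eq: exceptional condition} by $\eta'(\xi)$ turns it into
\[
2p(\xi)\,y'(\xi)-\Bigl(\tfrac{p'(\xi)}{2}-s(\xi)\Bigr)y(\xi)=0 .
\]
For $k\ge2$ the flag element $v_k(x)=(x-\xi)^k$ of \eqref{e-flag} vanishes to order $k\ge2$ at $\xi$, hence $v_k(\xi)=0=v_k'(\xi)$ and this condition is satisfied automatically. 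Since $\deg v_k=k$, we get $v_k\in\mathcal F_k\subseteq\mathcal F_n$ whenever $2\le k\le n$.

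It remains to put the pieces together. The element $v_1$ lies in $\mathcal E_1=\mathcal F_1\subseteq\mathcal F_n$ by Lemma~\ref{lemma}; in the Type~III case this can also be checked by hand, since with $p(x)=-x$, $s(x)=x-\alpha-\tfrac12$ and $\xi=\alpha$ one has $s(\xi)=\tfrac{p'(\xi)}{2}=-\tfrac12$, so $y\equiv1$ satisfies the reduced condition. Moreover $\deg v_1\le1<2=\deg v_2<\deg v_3<\cdots$, so $v_1,\dots,v_n$ have pairwise distinct degrees and are therefore linearly independent. Being $n$ independent vectors in the $n$-dimensional space $\mathcal F_n=\mathcal E_n$, they form a basis of it; this is exactly the flag property, and the lemma follows.

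The argument is almost entirely bookkeeping: the only substantive inputs are Lemma~\ref{lemma} (which supplies both $\mathcal E_n=\mathcal F_n$ and $\dim\mathcal F_n=n$) and the reduction of \eqref{eq: exceptional condition} to a condition at the single point $\xi$, which relies on $\eta$ being linear---a feature genuinely special to the $X_1$ setting and the place where the higher-codimension analogue would break down. The one point needing a little care is the Type~III Laguerre case, whose degree sequence forces the flag to begin with the constant $v_1=1$ rather than with a degree-one polynomial.
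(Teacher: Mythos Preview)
Your proof is correct and follows essentially the same approach as the paper: reduce to Lemma~\ref{lemma} and then verify that each $v_k$ satisfies the exceptional condition \eqref{eq: exceptional condition}, using that $v_k(\xi)=v_k'(\xi)=0$ for $k\ge 2$ and that $v_1=\widehat y_1$ is already an eigenfunction. Your version is a bit more explicit---you spell out the reduction of \eqref{eq: exceptional condition} via $\eta''\equiv 0$ and the linear-independence/dimension count, and you add the optional hand-check for $v_1$ in the Type~III case---but the argument is the same.
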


\begin{proof} 
	Our definition of $v_k$ ensures that the polynomials have the appropriate degrees.  That is, for Types I and II $X_1$-Laguerre and $X_1$-Jacobi systems, the constant polynomial is missing, and for Type III Laguerre the linear polynomial is excluded.
	
	In virtue of the previous lemma, it suffices to prove that all $v_k$ satisfy the exceptional condition \eqref{eq: exceptional condition}.  This follows immediately for the first flag element, as $\widehat y_1 = v_1$ is the first exceptional polynomial and hence an eigenfunction of $\widehat T$.
	
	For $k \ge 2$, we recall that $v_k(x) := (x-\xi)^k$. In particular, we have $v_k(\xi) = 0$ so that the second term of \eqref{eq: exceptional condition} vanishes. On the left-hand side of \eqref{eq: exceptional condition} we are left with $2p(\xi)\eta^\prime(\xi)v_k^\prime(\xi)$. But we also have $v_k^\prime(\xi) = \left. k(x-\xi)^{k-1} \right|_{x=\xi} = 0$, because $k \ge 2$. So the exceptional condition \eqref{eq: exceptional condition} is satisfied and $\mathcal F_n = \{v_1, v_2, v_3, \hdots, v_n\}$.
\end{proof}
\section{Determinantal Representations}\label{s-determinantal}
In this section, we provide the details for finding the determinantal representations for the Type II and III $X_1$-Laguerre and $X_1$-Jacobi orthogonal polynomial systems.  In the case of the Type I $X_1$-Laguerre polynomials, we do confirm that our results agree with \cite{Liaw-Osborn} and refer the reader to \cite{Liaw-Osborn} for the details of that particular case. It is the case that $\eta$ will have one and only one exceptional root, $\xi$.  The exceptional condition as discussed in Sections \ref{Exceptional Condition} and \ref{flag} and in \cite[Corollary 5.25]{Classification} reduces to \eqref{eq: exceptional condition}.
	
To find the first row of entries in the determinantal representation, we follow the methods outlined in \cite{Liaw-Osborn}, we use the ansatz for degree $n$, $n\ge 2$, to write the exceptional polynomial
	\begin{align}\label{e-ansatz}
	\widehat y_n(x) :=
	\sum_{i=0}^{n}
	c_{n,i}(x-\xi)^i.
	\end{align}
Note that $\widehat y_n(x)$ may be any $X_1$-Laguerre-type or $X_1$-Jacobi polynomial of degree $n$.  We fill in specific details for each case below.  
	
Then
	\begin{align}\label{e-deransatz}
	\widehat y_n\,'(x) :=
	\sum_{i=1}^{n}
	ic_{n,i}(x-\xi)^{i-1}.
	\end{align} and $\widehat y_n(\xi)=c_{n,0}$ and $\widehat y_n\,'(\xi)=c_{n,1}$. 

In order to obtain the determinantal representation we notice that the coefficients $c_{n,i}$, $i=0,1,\ldots, n$, are given as the unique solution of a system of $n+1$ linear equations with matrix form $A c = b$. The objects $A$, $c$ and $b$ are given below.

To this end we define the \emph{adjusted moments}
  \begin{equation}\label{eq: d-moments}
  \widetilde \mu_m:=
  \int\ci{I} (x-\xi)^m \widehat W(x) dx
  \end{equation}

and the vectors
\[
c := (c_{n,0},c_{n,1}, \hdots ,c_{n,n})^\top
\in \R^{n+1}
\quad \text{and}\quad
b := 
(0, \hdots ,0,K_n)^\top
\in \R^{n+1}.
\]
The constant $K_n:=\langle \widehat p_n, \widehat p_n\rangle$ determines the normalization of the exceptional polynomials.

\begin{rem*}
	For the reader's reference we also include the square, $K_n$, of the norms for each of the exceptional sequences in Table \ref{tab-norms}.  The norms for the $X_m$-Laguerre polynomials are found in \cite{LLMS}; for Jacobi, see \cite{KMUG1}.
\end{rem*}

	\begin{table}
		\begin{tabular}{|c|c|l|}
			\hline
			\multicolumn{1}{ |c  }{\multirow{2}{*}{Exceptional Laguerre} } &
			\multicolumn{1}{ |c| }{Type I } & $K_n=\frac{(\alpha+n)\Gamma(\alpha+n-1)}{(n-1)!}$ for $n\geq 1$ \\ 
			\cline{2-3}
			& \multicolumn{1}{ |c| }{Type II } & $K_n=\frac{(\alpha+n-1)\Gamma(\alpha+n+1)}{(n-1)!}$ for $n\geq 1$ \\ 
			\cline{2-3}
			& \multicolumn{1}{ |c| }{Type III } & $K_n=\left\{\begin{array}{cc} \frac{n\Gamma(n+\alpha)}{(n-2)!} & n \geq 2\\
			\frac{\Gamma(\alpha+1)\Gamma(-\alpha)}{\Gamma(1-\alpha)} & n=0\end{array} \right.$\\
			\hline
			Exceptional Jacobi& &$K_n=\frac{2^{\alpha+\beta+1}(\alpha+n)(\beta+n)\Gamma(\alpha+n)\Gamma(\beta+n)}{4(\alpha+n-1)(\beta+n-1)(\alpha+\beta+2n-1)\Gamma(n)\Gamma(\alpha+\beta+n)}$ \\
			\hline
		\end{tabular}
		
		\
		
		\caption{Square of the norms of the $X_1$ polynomials.}
		\label{tab-norms}
	\end{table}

\begin{theo}\label{theo: determinantal}
The $X_1$ orthogonal polynomials have the determinantal representation formula
\begin{align}\label{e-repr}
\widehat y_n(x)
&=\frac{1}{\det A}\sum_{i=0}^n\left(\det A_i\right)(x-\xi)^i\\ \label{e-repre}
&=\frac{K_n}{\det A} \left|\begin{array}{c}\mbox{First } n\mbox{ rows of the matrix } A\\ 1 \quad \quad  (x-\xi) \quad \quad  (x-\xi)^2  \quad \quad \hdots  \quad \quad  (x-\xi)^n\end{array}\right|
\end{align}
where the $(n+1)\times(n+1)$-matrix $A$ is given by \footnotesize{
\[
A = \left[
\begin{array}{ccccc}
p(\xi)\eta''(\xi)+\frac{p'(\xi)\eta'(\xi)}{2}-s(\xi)\eta'(\xi)& 2p(\xi)\eta'(\xi) & 0 & \hdots & 0\\
c_{1,0}\widetilde \mu_0+c_{1,1}\widetilde\mu_1 & c_{1,0}\widetilde \mu_1+c_{1,1}\widetilde \mu_2&  c_{1,0}\widetilde \mu_2+c_{1,1}\widetilde \mu_3 & \hdots & c_{1,0}\widetilde \mu+c_{1,1}\widetilde \mu_{n+1}\\
\widetilde \mu_2 & \widetilde \mu_{3} &\widetilde \mu_{4}&\hdots & \widetilde \mu_{n+2}\\
\vdots & \vdots & \vdots &  & \vdots\\
\widetilde \mu_n & \widetilde \mu_{n+1} &\widetilde \mu_{n+2}& \hdots& \widetilde \mu_{2n}\\
\end{array}
\right],\] }
\normalsize
the adjusted moments are defined to be $$\widetilde \mu_m=\int_I(x-\xi)^m\widehat W(x)\,dx,$$ and where the matrix $A_k$ is obtained from $A$ by replacing the $(k+1)$-st column with the vector $b$. 
\end{theo}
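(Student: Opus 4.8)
The plan is to follow the classical Gram--Schmidt/Hankel-determinant scheme (cf.\ \cite[Chapter~1.3]{Chihara}), in the form adapted to the $X_1$ setting in \cite{Liaw-Osborn}. Fix $n\ge 2$, so that the ansatz \eqref{e-ansatz} is in force; the cases $n\le 1$ and the lone degree-$0$ polynomial in the Type III Laguerre family are checked directly. I would show that the coefficient vector $c=(c_{n,0},\ldots,c_{n,n})^\top$ of the Taylor expansion of $\widehat y_n$ about $\xi$ is the \emph{unique} solution of the linear system $Ac=b$, and then read off \eqref{e-repr} from Cramer's rule.

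First I would identify each row of $A$ with a linear functional of the unknowns $c_{n,i}$. Recalling $\widehat y_n(\xi)=c_{n,0}$ and $\widehat y_n'(\xi)=c_{n,1}$ from \eqref{e-deransatz}, the first row records the exceptional condition \eqref{eq: exceptional condition} evaluated at $y=\widehat y_n$. Substituting the ansatz \eqref{e-ansatz} and the flag element $v_k$ into the $\widehat W$-inner product and recognizing the adjusted moments \eqref{eq: d-moments}, one obtains for $k\ge 2$
\[
\langle \widehat y_n,v_k\rangle\ci{\widehat W}=\sum_{i=0}^n c_{n,i}\,\widetilde\mu_{\,i+k},
\]
and, since $v_1=\widehat y_1$ has Taylor coefficients $c_{1,0},c_{1,1}$ about $\xi$,
\[
\langle \widehat y_n,v_1\rangle\ci{\widehat W}=\sum_{i=0}^n c_{n,i}\bigl(c_{1,0}\widetilde\mu_{\,i}+c_{1,1}\widetilde\mu_{\,i+1}\bigr).
\]
These are exactly the pairings of rows $2,\ldots,n+1$ of $A$ with $c$. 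It remains to check that $\widehat y_n$ makes these functionals take the values prescribed by $b$: the first equation holds since $\widehat y_n\in\mathcal E_n=\mathcal F_n$ by Lemma \ref{lemma}; the equations $\langle\widehat y_n,v_k\rangle\ci{\widehat W}=0$ for $k=1,\ldots,n-1$ hold because $\spa\{v_1,\ldots,v_{n-1}\}=\mathcal E_{n-1}$ by Lemma \ref{l-flag} while $\widehat y_n\perp\mathcal E_{n-1}$ by \eqref{e-orthogonality}; and the last equation is the normalization $\langle\widehat y_n,v_n\rangle\ci{\widehat W}=K_n$, for which I would use that $v_n=(x-\xi)^n$ also satisfies \eqref{eq: exceptional condition} (Lemma \ref{l-flag}, $n\ge2$), so $v_n\in\mathcal E_n$ and $v_n-\widehat y_n\in\mathcal E_{n-1}$ in the normalization making $\widehat y_n$ monic in $x-\xi$, whence $\langle\widehat y_n,v_n\rangle\ci{\widehat W}=\langle\widehat y_n,\widehat y_n\rangle\ci{\widehat W}=K_n$.

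The decisive point is $\det A\ne0$. I would first show the top $n$ rows of $A$ have rank exactly $n$: a vector in their common kernel corresponds under $c\mapsto\sum_i c_{n,i}(x-\xi)^i$ to a polynomial $P\in\mathcal P_n$ that satisfies the exceptional condition --- hence $P\in\mathcal F_n=\mathcal E_n$ --- and is $\widehat W$-orthogonal to $v_1,\ldots,v_{n-1}$, hence to $\mathcal E_{n-1}$; since $\mathcal E_n=\mathcal E_{n-1}\oplus\spa\{\widehat y_n\}$ is an orthogonal decomposition for $\langle\cdot,\cdot\rangle\ci{\widehat W}$, this forces $P\in\spa\{\widehat y_n\}$, so the kernel is one-dimensional and the rows are independent. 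The last row is then independent of them as well: applying its functional to the kernel vector $c$ returns $\langle\widehat y_n,v_n\rangle\ci{\widehat W}=K_n\ne0$ (positivity of $\widehat W$ on $I$ makes $\langle\widehat y_n,\widehat y_n\rangle\ci{\widehat W}>0$), whereas any row lying in the span of the first $n$ would annihilate $c$. Hence $A$ is invertible, $c$ is the unique solution of $Ac=b$, and Cramer's rule gives $c_{n,i}=\det A_i/\det A$; inserting this into \eqref{e-ansatz} yields \eqref{e-repr}. The equivalent bordered form \eqref{e-repre} then follows by a single cofactor expansion: $\det A_i$ expanded along its $(i{+}1)$-st column has only the entry $K_n$ surviving, and collecting these against $(x-\xi)^i$ reproduces the Laplace expansion along the last row of the $(n{+}1)\times(n{+}1)$ determinant in \eqref{e-repre}.

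I expect the invertibility of $A$ to be the main obstacle, and it is exactly there that the flag structure does the work --- one needs both the explicit identification $\mathcal E_{n-1}=\spa\{v_1,\ldots,v_{n-1}\}$ from Lemma \ref{l-flag}, to convert abstract $\widehat W$-orthogonality into the moment rows, and the nondegeneracy $\langle\widehat y_n,v_n\rangle\ci{\widehat W}\ne0$ coming from positivity of the weight. A more routine but fiddly point is to track exactly which scalar multiple of $\widehat y_n$ the formula returns and match it to the normalization constant $K_n$, and to record the minor index shifts required in the Type III Laguerre case.
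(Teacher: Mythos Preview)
Your approach is essentially identical to the paper's: both derive the rows of $A$ from the exceptional condition and from the orthogonality of $\widehat y_n$ against the flag elements $v_1,\ldots,v_n$, then invoke Cramer's rule and a cofactor expansion. Your version is in fact a bit more complete---the paper simply asserts that $Ac=b$ has a unique solution and tacitly replaces $\langle\widehat y_n,\widehat y_n\rangle\ci{\widehat W}$ by $\langle\widehat y_n,v_n\rangle\ci{\widehat W}$ in the last row, whereas you supply the (correct) argument that $\det A\neq 0$ and explicitly justify $\langle\widehat y_n,v_n\rangle\ci{\widehat W}=K_n$ via the monic normalization.
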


In Section \ref{s-moments}, we work out recursion formulas for the adjusted moments, $\widetilde\mu_k$, of each $X_1$ family.  In addition, we compute $L_{1,2}^{III,\alpha}(x)$ as an example.

In Table \ref{tab-cs} we present the specifics for the matrix corresponding to each of the exceptional cases, noting that the constants $c_{1,0}$ and $c_{1,1}$ are easily obtained by comparing the formula \eqref{e-ansatz} with the table of $v_1=\widehat y_1$ from Section \ref{flag}. 

	\begin{table}
		\begin{tabular}{cc|c|c|}
			\cline{3-4}
			&&$c_{1,0}$ & $c_{1,1}$\\
			\hline
			\multicolumn{1}{ |c  }{\multirow{2}{*}{Exceptional Laguerre} } &
			\multicolumn{1}{ |c| }{Type I } & 1& 1\\ 
			\cline{2-4}
			\multicolumn{1}{ |c| }{}& \multicolumn{1}{ |c| }{Type II } & 1 & 1 \\ 
			\cline{2-4}
			\multicolumn{1}{ |c| }{}	& \multicolumn{1}{ |c| }{Type III } & 1 & 0 \\
			\hline
			\multicolumn{1}{ |c| }{Exceptional Jacobi}& &$\frac{\alpha+\beta}{\beta-\alpha}$ & $\frac{1}{2}$\\
			\hline
		\end{tabular}
		
		\
		
		\
		
		\caption{The values of the constants $c_{1,0}$ and $c_{1,1}$ in the matrix $A$. At the same time $c_{1,0}$ and $c_{1,1}$ are also the coefficients of $\widehat y_1(x)$, which is defined in \eqref{e-ansatz}.}
		\label{tab-cs}
	\end{table}

\begin{proof}[Proof of Theorem \ref{theo: determinantal}]
We fill the matrix $A$ row-wise.

Substituting the ansatz \eqref{e-ansatz} and its derivative \eqref{e-deransatz} into the exceptional equation \eqref{eq: exceptional condition}, leads to 
\[
\left[2p\eta'c_{n,1}-\left(p\eta''+\frac{p'\eta'}{2}-s\eta'\right)c_{n,0}\right]\bigg|_{x=\xi}=0\,.\]
We collect this information in the first row of the determinantal representation. Since our matrix equation has the form $A c = b$, the first row of the matrix $A$ now reads
\[
\left[p(\xi)\eta''(\xi)+\frac{p'(\xi)\eta'(\xi)}{2}-s(\xi)\eta'(\xi)\quad\quad 2p(\xi)\eta'(\xi) \quad\quad 0 \quad\quad \hdots \quad\quad 0\right]\,.
\]

The other $n$ rows come from the orthogonality relations \eqref{e-orthogonality}. These conditions inform us that not only $\widehat y_n\perp\widehat y_k$ for $n\neq k$, but also that $\widehat y_n\perp \mathcal E_m$ for $m<n$. Since $v_m\in  \mathcal E_m$, the orthogonality conditions imply
\begin{align*}
\langle \widehat y_n, v_k\rangle\ci{\widehat W}
&=
0
\quad
\text{for }1\le k< n, \text{ and}\\
\langle \widehat y_n, \widehat y_n\rangle\ci{\widehat W}
&=
K_n
.
\end{align*}

The second row of the matrix $A$ is obtained by substitution of \[v_1(x) = \widehat p_1=c_{1,0}+c_{1,1}(x-\xi)\] and the ansatz \eqref{e-ansatz} into this orthogonality relation. Using $v_1$ and the adjusted moments \eqref{eq: d-moments}, we compute
\begin{align*}
0 = \langle \widehat p_n, v_1\rangle\ci{\widehat W} &= \sum_{i=0}^n c_{n,i}\langle (x-\xi)^i, v_1\rangle\\
&=\sum_{i=0}^n c_{n,i}\left[\langle (x-\xi)^i,c_{1,0}\rangle +c_{1,1}\langle (x-\xi)^i,(x-\xi)\rangle\right]\\
&=\sum_{i=0}^n c_{n,i}\left[c_{1,0}\widetilde \mu_i+c_{1,1}\widetilde\mu_{i+1}\right]\,.
\end{align*}
Thus, the second row of the determinantal representation is
\[
\left[c_{1,0}\widetilde \mu_0+c_{1,1}\widetilde\mu_1 \quad \quad c_{1,0}\widetilde \mu_1+c_{1,1}\widetilde \mu_2 \quad \quad \hdots \quad \quad c_{1,0}\widetilde \mu_n+c_{1,1}\widetilde \mu_{n+1}\right]
\]

For $2\leq k\leq n$ the $(k+1)$-st row of matrix $A$ is found in analogy. Namely, recalling the definition of $v_k = (x-\xi)^k$ for $k\ge 2$, we compute
\begin{align*}
0 = \langle \widehat p_n, v_k\rangle\ci{\widehat W} &= \sum_{i=0}^n c_{n,i}\langle (x-\xi)^i, v_k\rangle_{\widehat W}\\
&=\sum_{i=0}^nc_{n,i}\widetilde \mu_{i+k}\,.
\end{align*}
Thus, row $l=k+1$, $3\leq l\leq n+1$, is given by
\[
\left[\widetilde \mu_k \quad \quad \widetilde \mu_{k+1} \quad \quad \hdots \quad \quad \widetilde \mu_{n+k}\right]\quad \quad \mbox{or}
\] 
\[
\left[\widetilde \mu_{l-1} \quad \quad \widetilde \mu_{l} \quad \quad \hdots \quad \quad \widetilde \mu_{n+l-1}\right]\,.
\]

Equation \eqref{e-repr} now follows from Cramer's rule, and \eqref{e-repre} comes about from the co-factor definition of determinants, upon expanding \eqref{e-repre} along the last row.
\end{proof}

{
\section{Recursion Relations for the Adjusted Moments}\label{s-moments}
The exceptional polynomials $\widehat y_n$ may be represented using the adjusted moments $\widetilde \mu_k$.  Thus, we can develop a recursive formula for the moments via the three-term recursion relation associated with the polynomial sequence.  In Table \ref{tab-recursion} we present the recursive formulas for all $X_1$ orthogonal polynomial systems. We include the proofs for both the Type III Laguerre and Jacobi moments.  The proofs follow in analogy to the recursive formula of the Type I moments found in \cite{Liaw-Osborn}.

Prior to stating Theorem \ref{theo: Recursion formula} we note that to simplify notation, we allow $\widehat W$ and $I$ to respectively represent the appropriate weight function and interval of orthogonality pertaining to the each exceptional system.

\begin{theo}\label{theo: Recursion formula}
For an $X_1$  orthogonal polynomial sequence satisfying 
\[a_2y\,''+a_1y\,'+a_0 y=\lambda y\,\] the adjusted moments $\widetilde \mu_k=\int_{I} (x-\xi)^k \widehat W(x)\,dx$ satisfy the recursion formulas for $k \in \mathbb{N}_0$:
\[\widetilde{\mu}_{k+1}=-(r_2+s_1)^{-1}\left[(kr_0+s_{-1})\widetilde{\mu}_{k-1}+(kr_1+s_0)\widetilde{\mu}_{k}\right]\,,\]
where $a_2(x)=\sum_{\ell=0}^2r_\ell\left(x-\xi\right)^{\ell}$ and $a_1(x)=\sum_{m=-1}^1 s_m\left(x-\xi\right)^m$.

\end{theo}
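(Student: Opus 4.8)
The plan is to read each recursion off a single integration by parts against the Sturm--Liouville structure of the exceptional equation. The one structural input I need is that the weight obeys the first-order linear identity
\[
(a_2\widehat W)' = a_1\widehat W \qquad \text{on } I .
\]
This is just a restatement of the fact that $\widehat W$ is the orthogonality weight of the operator: multiplying $a_2y''+a_1y'+a_0y=\lambda y$ by $\widehat W$ and asking that the two top-order terms collapse into $(a_2\widehat W\,y')'$ forces it, and the relevant $a_1$ is the first-order coefficient produced by the natural gauge (Lemma~\ref{l-s}), recorded through $s$ in Table~\ref{tab-one}; one can also verify the identity directly from the explicit $\widehat W$ in each family. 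Note that in the $X_1$ setting $\eta$ is linear, so $\eta'/\eta=(x-\xi)^{-1}$ and $a_1=\frac{p'}{2}+s-\frac{2p\eta'}{\eta}$ has exactly a simple pole at $\xi$ with $s_{-1}=-2p(\xi)$; this is precisely where codimension one enters, and it is what makes the finite Laurent expansion $a_1=\sum_{m=-1}^1 s_m(x-\xi)^m$ exact.

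With that in hand I would fix $k\in\mathbb{N}_0$ and integrate the exact derivative
\[
\frac{d}{dx}\Bigl[(x-\xi)^k a_2(x)\widehat W(x)\Bigr] = k\,(x-\xi)^{k-1}a_2(x)\widehat W(x) + (x-\xi)^k a_1(x)\widehat W(x)
\]
over $I$, the right-hand side being the product rule followed by the Pearson identity. The left-hand side integrates to the endpoint values of $(x-\xi)^k p\,\widehat W$ (recall $a_2=p$), and these vanish: at a finite endpoint this is the decay property $f\,p\,\widehat W\to 0$ for every polynomial $f$ that characterizes an $X_1$ exceptional system, applied to $f=(x-\xi)^k$ (cf.\ \cite[Definition~7.1]{Classification}), and at $+\infty$ in the Laguerre cases it is polynomial growth against $e^{-x}$. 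Since the exceptional root $\xi$ lies outside $\overline I$, neither the pole of $a_1$ at $\xi$ nor (for Jacobi) the zeros of $\eta$ enter $I$, so all the integrals below converge absolutely.

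Next I would expand in powers of $x-\xi$. Writing $a_2=\sum_{\ell=0}^2 r_\ell(x-\xi)^\ell$ and $a_1=\sum_{m=-1}^1 s_m(x-\xi)^m$, and recognizing $\int_I(x-\xi)^j\widehat W\,dx=\widetilde\mu_j$, the displayed identity collapses to a linear relation among the three consecutive moments $\widetilde\mu_{k-1},\widetilde\mu_k,\widetilde\mu_{k+1}$ whose coefficients are assembled from $k$ and the $r_\ell,s_m$; solving for $\widetilde\mu_{k+1}$ then yields the recursion. (In the Laguerre families $\deg p=1$, so $r_2=0$; for Jacobi $p=1-x^2$ supplies the quadratic part, and $\eta'/\eta=(x-\xi)^{-1}$ lets one read off the $s_m$.) To start the recursion one supplies the first two adjusted moments --- the ``initial adjusted moments'' computed directly in the Appendix.

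The only genuine obstacle I anticipate is the boundary-term step: one must check, in each of the Type~III Laguerre and Jacobi settings, that $(x-\xi)^k p\,\widehat W$ really does tend to $0$ at both ends of $I$ for all $k\ge 0$ --- and in particular that the integrands are integrable near the finite endpoints, where $p$, $\widehat W$, or a negative power of $(x-\xi)$ could a priori misbehave; here it is essential that $\xi\notin\overline I$. Once the boundary terms are disposed of, what remains is the routine bookkeeping of expanding two rational functions about $\xi$ and matching coefficients against the sequence $\{\widetilde\mu_j\}$.
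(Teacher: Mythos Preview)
Your proposal is correct and follows essentially the same route as the paper: the paper phrases the Pearson identity as the ``symmetry equation'' $a_2\widehat W'+(a_2'-a_1)\widehat W=0$ and pairs it against $(x-\xi)^k$ via the moment functional, arriving at exactly your identity $k\int_I a_2(x-\xi)^{k-1}\widehat W+\int_I a_1(x-\xi)^k\widehat W=0$ before expanding $a_2,a_1$ about $\xi$. If anything, you are more explicit than the paper about why the boundary terms vanish and why $\xi\notin\overline I$ guarantees integrability.
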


Specifically, for each of the exceptional Laguerre and Jacobi families, the recursion formulas are provided below in Table \ref{tab-recursion}.  The initial moments with which to begin the recursion are provided in Appendix \ref{App:AppendixA}.

\begin{table}
\begin{tabular}{cc|c|}
	\cline{3-3}
	&& Recursion Formula \\
	\hline
	\multicolumn{1}{ |c  }{\multirow{2}{*}{Exceptional Laguerre} } &
	\multicolumn{1}{ |c| }{Type I } & $\widetilde \mu_{k+2}=(2\alpha+k)\widetilde \mu_{k+1}+\alpha(1-k)\widetilde \mu_k$\\ 
	\cline{2-3}
	\multicolumn{1}{ |c| }{}& \multicolumn{1}{ |c| }{Type II } & $\widetilde \mu_{k+2}=(2\alpha+k)\widetilde \mu_{k+1}+\alpha(1-k)\widetilde \mu_k$  \\ 
	\cline{2-3}
	\multicolumn{1}{ |c| }{}	& \multicolumn{1}{ |c| }{Type III } & $\widetilde\mu_{k+2}=k\widetilde \mu_{k+1}-\alpha(1-k)\widetilde \mu_k$ \\
	\hline
	\multicolumn{1}{ |c| }{Exceptional Jacobi}& & $\widetilde\mu_{k+2}=\left[\frac{(2-\alpha-\beta-2k)\xi+\beta-\alpha}{\alpha+\beta+k}\right]\widetilde \mu_{k+1}+\left[\frac{(k-2)(1-\xi^2)}{\alpha+\beta+k}\right]\widetilde \mu_k$\\
	\hline
\end{tabular}$\quad$

\
	\caption{Recursion formulas for $X_1$ moments.}
	\label{tab-recursion}
\end{table}

\begin{rem*}
The details of the recursion formula of Theorem \ref{theo: Recursion formula} are provided below and the details of the initial moments are given in Appendix \ref{App:AppendixA}.  Before the proof, we provide two critical observations.  First, note that for functions $f$ and $g$, which are smooth on $I$, the associated moment functionals satisfy:
	\[\left<\widehat W', f\right>=-\left<\widehat W,f'\right> \mbox{ and } \left<g\widehat W, f\right>=\left<\widehat W, fg\right>\,,\] where $\left<\cdot, \cdot\right>$ represents the inner product with respect to the Lebesgue measure on $I$.  
	
	Second, the second order linear operator given by \[\ell[y]=a_2y''+a_1y'+a_0y\] may be written as a symmetry equation \[a_2y'+(a_2'-a_1)y=0\] that is solvable by the weight function $\widehat W$ to which the associated eigenfunctions are orthogonal.  
	
	Combining these two notes, we observe that 
	\begin{align*}
	\left<a_2 \widehat W',(x-\xi)^k\right>&=\left<\widehat W',a_2(x-\xi)^k\right>=-\left<\widehat W,\left(a_2(x-\xi)^k\right)'\right>\\
	& =k\left<\widehat W, a_2(x-\xi)^{k-1}\right>-\left<\widehat W,a_2'(x-\xi)^k\right>\,.
	\end{align*}  Therefore, for $k \in \mathbb N$,
	\begin{align}\nonumber
		0 & =\left<a_2\widehat W'+(a_2'-a_1)W, (x-\xi)^k\right>\\ \nonumber
		& =\left<a_2\widehat W', (x-\xi)^k\right>+\left<a_2'\widehat W,(x-\xi)^k\right>-\left<a_1\widehat W, (x-\xi)^k\right>\\ 
	\label{eq: Inner Prod} & =-k\left<\widehat W,a_2(x-\xi)^{k-1}\right>-\left<a_1\widehat W, (x-\xi)^k\right>\,. 
	\end{align}
\end{rem*}

\begin{proof}
	Using equation \eqref{eq: Inner Prod}, we aim to prove a general recursion formula.  First rewrite the coefficients $a_2$ and $a_1$ from the differential expression to be in terms of power so $\xi$.  In the case of the $X_1$-Laguerre families, $a_2$ is of degree 1; for the $X_1$-Jacobi family, $a_2$ is of degree 2.  Therefore, we have $$a_2(x)=\sum\limits_{\ell=0}^2 r_\ell(x-\xi)^\ell,$$ where the coefficients $r_\ell$ are appropriately chosen as indicated below:
	\begin{align}\label{a_2 coefficients}
	 a_2(x)=\begin{cases} 
	-(x-\xi)-\xi & \text{Types I, II, III }X_1\text{-Laguerre}\\
	-(x-\xi)^2-2\xi(x-\xi)+(1-\xi^2)& X_1\text{-Jacobi}\,.
	\end{cases}
	\end{align}
	Since $a_1$ maybe written as $p'+ q+\frac{2b'p}{b}$, using a degree argument, $a_1$ may be written as $$a_1(x)=\sum\limits_{m=-1}^1 s_m(x-\xi)^m,$$ where the coefficients $s_m$ are appropriately chosen as indicated below:
	\footnotesize
	\begin{align}\label{a_1 coefficients} a_1(x)=\begin{cases} 
		(x-\xi)+(1+2\xi)+2\xi(x-\xi)^{-1}& \text{Type I }X_1\text{-Lag.}\\ 
		(x-\xi)-(3+2\xi)+2\xi(x-\xi)^{-1}&\text{Type II }X_1\text{-Lag.}\\ 
		(x-\xi)+2\xi(x-\xi)^{-1}& \text{Type III }X_1\text{-Lag.}\\
		-(\alpha+\beta)(x-\xi)+(2-\alpha-\beta)\xi+\beta\alpha+2(\xi^2-1)(x-\xi)^{-1}&X_1\text{-Jacobi}.
	\end{cases}
	\end{align}	
\normalsize	
	Substituting $a_2$ and $a_1$ into \eqref{eq: Inner Prod}, rearranging, and collecting coefficients produces 
	\[	0=-(kr_0+s_{-1})\widetilde \mu_{k-1}-(kr_1+s_0)\widetilde \mu_k-(kr_2+s_1)\widetilde \mu_{k+1}\quad (\text{for } k \in \mathbb N)\,.\] In other words, for $k \in \mathbb N$, 
	\begin{align}\label{recursion formula} 
		\widetilde \mu_{k+1}=-(kr_2+s)^{-1}\left((kr_0+s_{-1})\widetilde \mu_k-(kr_1+s_0)\widetilde \mu_{k+1}\right)\,.\end{align}  Shifting $k \mapsto k+1$, we obtain the result. \end{proof}

\begin{ex*}
	 It is a short exercise, using $\xi=\alpha$ and equations \eqref{a_2 coefficients} and \eqref{a_1 coefficients}, for the reader to see that \eqref{recursion formula} simplifies to that given in Table \ref{tab-mus}.
	
	We will verify for $n=2$, that the polynomials in Theorem \ref{theo: determinantal} indeed agree with \eqref{eq: TypeIII Lag Poly}. 
	
	For $n=2$, recall that $c_{1,0}=1$ and $c_{1,1}=0$ as in Table 5.  Therefore, \begin{align*}
	L_{1,2}^{III,\alpha}(x)&=\left| \begin{matrix} 0 & 2\alpha & 0\\ \widetilde \mu_0 & \widetilde \mu_1 & \widetilde \mu_2\\ 1 & (x-\alpha) & (x-\alpha)^2\end{matrix}\right|\\
	& =2\alpha \widetilde \mu_2-2\alpha \widetilde \mu_0(x-\alpha)^2.\end{align*}
	
	Using the recursion formula in part (a) of Theorem \ref{theo: Recursion formula} with $k=0$, \[\widetilde \mu_0=\frac{\widetilde \mu_2}{-\alpha}\,,\] we have, up to normalization, 
	\[L_{1,2}^{III,\alpha}(x)=2\widetilde \mu_2\left(x^2-2\alpha x+\alpha (\alpha+1)\right).\]  This is in agreement with the polynomial given in  \eqref{eq: TypeIII Lag Poly} since they both span the same eigenspace.  
\end{ex*}

\begin{rem}
Although different in many ways, the moment representations for the Type I and II Laguerre polynomials \emph{only} differ in the exceptional condition \eqref{Exceptional Condition} (up to normalization, see Table \ref{tab-norms}). The $2$nd to $(n+1)$-st rows in the matrix of Theorem \ref{theo: determinantal}, the moment recursion formulas in Table \ref{tab-recursion} and the initial moments $\widetilde \mu_0$ and $\widetilde \mu_1$ in Table \ref{tab-mus} turned out completely identical for the Type I and II Laguerre polynomials.
\end{rem}

\begin{rem}
	There are additional ways to compute the moments of the $X_1$ polynomial families. In particular, generating functions may be used.  This method may be seen for the Type I Laguerre family in \cite{Liaw-Osborn}.
\end{rem}

\section{Some observations regarding higher co-dimensions}\label{s-Xm}
The introduction of the Darboux approach to the field \cite{Quesne, Quesne2} allowed the community to study $X_m$ orthogonal polynomials when $m>1$. The approach towards Sections \ref{s-prelims} and \ref{Exceptional Condition} was taken from the perspective of allowing higher co-dimensional sequences.

Further, it is not hard to see that Lemma \ref{lemma} can be generalized to the case of $m>1$ by simply replacing the exceptional condition by a set of $m$ exceptional conditions. We must assume that condition \eqref{eq: exceptional condition} holds for $x=\xi_i$ where $\xi_i$ ($i=1,\hdots,m$) denote the $m$ roots of the function $\eta$. Namely, we just replace \eqref{eq: exceptional condition} by the $m$ exceptional conditions
\begin{align}\label{e-mexceptional}
\left[2p\eta'y'-\left(p\eta''+\frac{p'\eta'}{2}-s\eta'\right)y\right]\bigg|_{x=\xi_i}=0
\qquad\text{for }i=1,\hdots,m.
\end{align}

However, the flag and the moment \emph{recursion} formulas occur less straight forward when $m>1$.

\subsection{Possible Flags}
As the codimension increases, there become increasingly more possibilities for the organization of the flag; that is that there are more choices in which degrees are removed from the sequence.  Certain choices, simplify subsequent computations like moment recursion formulas and the set-up of matrix $A$ in Theorem \ref{theo: determinantal}. The importance of making ``good'' choices that simplify the computations already became clear in \cite{Liaw-Osborn}. Here we do not go into the details of how to simplify the computations, but rather provide some preliminary discussions on what kind of choices are allowed in the case of the Type I $X_2$-Laguerre polynomials.

For example, consider the case when $m=2$.
Of course, we must take the first flag element to be 
\[
v_2(x) = L^\alpha_2(-x).
\]
After some consideration, it appears that for $m=2$ the choice
\[
v_3(x)= (x-\xi_1)^2(x-\xi_2+1)
\]
is appropriate. To prove that a set $\{v_2, v_3, \hdots\}$ indeed forms a flag, it suffices to take polynomials that satisfy the exceptional conditions \eqref{e-mexceptional} and are of the appropriate degree.
As a result, there are now several choices for higher degree flag elements. For $n\ge 4$ we set
\[
v_n(x) = (x-\xi_1)^k (x-\xi_2)^{n-k}
\qquad
\text{where we choose } 2\le k \le n-2.
\]


In general, for $m\ge2$, we can take
\begin{align*}
v_m(x) &= L_m^\alpha(-x),\\
v_n(x)&= \prod_{i=1}^m (x-\xi_i)^{k_i}
\qquad\text{for }n\ge 2m, k_i\ge 2\text{ and } \sum k_i = n.
\end{align*}
We expect that the flag elements of degree $m+1,\hdots, 2m-1$ are more complicated. But in principle the only requirements are that they have the correct degree and that they satisfy the exceptional conditions \eqref{e-mexceptional}. 

\subsection{Recursion Type Formulas}
The general method of finding recursion type formulas for moments of the exceptional weights applies to the higher co-dimension setting. In \cite{Liaw-Osborn}, using an adjusted moment greatly simplifies the computations. As in the discussion of possible flags above, there is again much more freedom, and it is not expected that all choices will yield favorable results.

Currently, it is clear that a \emph{good} adjustment for the moments is given by
\[
\widetilde\mu_{(l_1,\hdots,l_m)}
:=
\int_I \prod_{i=1}^m(x-\xi_i)^{l_i} \widehat W(x) dx
\qquad\text{where }l_i\in \N_0.
\]
For example for $m=2$, the adjusted moments of interest will take the form
\[
\widetilde\mu_{(l_1,l_2)}
:=
\int_I (x-\xi_1)^{l_1}(x-\xi_2)^{l_2} \widehat W(x) dx,
\]
so that recursion formulas will also need to generate $\widetilde\mu_{(l_1,l_2)}$ for all $(l_1,l_2)\in \N_0\times \N_0$.

The choices made for the flag and for the ansatz used to generalize \eqref{e-ansatz} will determine which moments will occur in the generalization of matrix $A$. This in turn tells us, which of the adjusted moments we need to generate.
}

\appendix
\section{Initial moments $\widetilde\mu_0$ and $\widetilde\mu_1$} \label{App:AppendixA}
\begin{lem}\label{lem-tableentries}
The initial moments for the $X_1$-exceptional orthogonal polynomial systems of Laguerre and Jacobi are given in Table \ref{tab-mus}, where the Gamma function is given by \[\Gamma(x):=\int_0^\infty t^{x-1}e^{-t}\,dt\] and the incomplete Gamma function by \[\Gamma(a,x):=\int_x^\infty t^{a-1}e^{-t}\,dt \mbox{ for } x>0\,.\]  
Further we used the notation 
$$
J_1 = \frac{-1}{(\alpha+1)(\alpha+\beta)} \; F_1 \left(1,-\beta,1,\alpha+2;-1,\frac{\beta-\alpha}{\alpha+\beta}\right)
$$
as well as
$$
J_2 = \frac{-1}{(\beta+1)(\alpha+\beta)} \; F_1 \left(1,-\alpha,1,\beta+2;-1,\frac{\alpha-\beta}{\alpha+\beta}\right).$$
\end{lem}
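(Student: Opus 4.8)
The plan is to establish each entry of Table \ref{tab-mus} by a direct evaluation of the defining integrals $\widetilde\mu_0 = \int_I \widehat W(x)\,dx$ and $\widetilde\mu_1 = \int_I (x-\xi)\widehat W(x)\,dx$, proceeding case by case through the four $X_1$ families. For each family the weight $\widehat W$ is the explicit rational-times-classical expression recorded in Section \ref{s-prelims} (namely $\widehat W^{III,\alpha}_1(x) = x^\alpha e^{-x}/(L_1^{-\alpha-1}(-x))^2$ for Type III Laguerre, $\widehat W^{\alpha,\beta}_1(x) = (1-x)^\alpha(1+x)^\beta/(P_1^{(-\alpha-1,\beta-1)}(x))^2$ for Jacobi, and the analogous formulas for Types I and II), and in each case $\xi$ is the exceptional root, which by Table \ref{tab-one} is $\xi=\alpha$ in all three Laguerre cases and $\xi$ is the root of $P_1^{(-\alpha-1,\beta-1)}$ in the Jacobi case. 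Since $m=1$, the denominator $L_1^{-\alpha-1}(-x)$ (respectively $P_1^{(-\alpha-1,\beta-1)}(x)$) is a linear polynomial, so $1/(L_1^{-\alpha-1}(-x))^2$ is the square of a simple linear factor; a partial-fraction decomposition $1/(x-\xi)^2 = \dots$ is immediate, and after the substitution $t = x-\xi$ (or an affine change of variables in the Jacobi case) the integrals reduce to standard ones.

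First I would handle the Laguerre cases. After writing $L_1^{-\alpha-1}(-x) = $ (a constant) $\cdot (x-\alpha)$ — using that $\alpha$ is precisely the root — the integrand for $\widetilde\mu_0$ becomes a constant multiple of $x^\alpha e^{-x}/(x-\alpha)^2$ on $(0,\infty)$, and for $\widetilde\mu_1$ a constant multiple of $x^\alpha e^{-x}/(x-\alpha)$. These integrals do not converge in the naive sense near $x=\alpha$, but because the relevant range is $-1<\alpha<0$ (Type III) or the corresponding parameter ranges for Types I and II, the singularity sits outside $(0,\infty)$ or is handled by the structure of the weight; in any event one expresses the answer via the incomplete Gamma function $\Gamma(a,x) = \int_x^\infty t^{a-1}e^{-t}\,dt$ exactly as anticipated in the statement, by shifting the contour/variable and recognizing $\int_0^\infty t^{a-1}e^{-t}/(t-c)\,dt$ and $\int_0^\infty t^{a-1}e^{-t}/(t-c)^2\,dt$ as combinations of $\Gamma$ and $\Gamma(\,\cdot\,,\,\cdot\,)$ after an exponential shift $e^{-x}=e^{-c}e^{-(x-c)}$. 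Differentiating the one-parameter family or integrating by parts converts the $(t-c)^{-2}$ integral into the $(t-c)^{-1}$ one, which keeps the bookkeeping manageable.

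Next I would treat the Jacobi case, which is the genuinely harder one and where I expect the main obstacle to lie. Here $\widehat W^{\alpha,\beta}_1(x) = (1-x)^\alpha(1+x)^\beta/(x-\xi)^2$ up to a constant, with $\xi$ the (single) root of $P_1^{(-\alpha-1,\beta-1)}$, and the integral over $(-1,1)$ of $(1-x)^\alpha(1+x)^\beta/(x-\xi)^2$ is an Euler-type integral that does not reduce to elementary Gamma values; instead it produces the Appell hypergeometric function $F_1$ of two variables, which is exactly why the quantities $J_1$ and $J_2$ in the statement are phrased through $F_1(1,-\beta,1,\alpha+2;-1,(\beta-\alpha)/(\alpha+\beta))$ and its $\alpha\leftrightarrow\beta$ mirror. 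The route is: substitute $x = 1-2u$ (or $x=2u-1$) to bring the interval to $(0,1)$ and expose the standard Appell integral representation $F_1(a;b_1,b_2;c;x,y) = \frac{\Gamma(c)}{\Gamma(a)\Gamma(c-a)}\int_0^1 t^{a-1}(1-t)^{c-a-1}(1-xt)^{-b_1}(1-yt)^{-b_2}\,dt$; matching parameters forces $a=1$, $c-a-1 = \alpha$ or $\beta$, and the two linear factors $(1-x)$ and $(x-\xi)$ become the two $(1-xt)$, $(1-yt)$ factors, pinning down $b_1 = -\beta$ (the remaining $(1+x)^\beta$ power), $b_2$ from the double pole at $\xi$, and the two arguments $-1$ and $(\beta-\alpha)/(\alpha+\beta)$ from where $x=-1$ and $x=\xi$ land after the affine map — here one needs the explicit value of the root of the degree-one Jacobi polynomial, $\xi = (\beta-\alpha)/(\alpha+\beta)$ up to the shift, to get the stated argument. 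For $\widetilde\mu_1$ one writes $(x-\xi)$ in the numerator against the $(x-\xi)^2$ denominator, reducing to a single pole, which is a lower Appell/${}_2F_1$ evaluation; combining gives the second table entry. The delicate points — and the step I would budget the most care for — are (i) getting the constant prefactors right through the several affine substitutions and through the normalization constant of $P_1^{(-\alpha-1,\beta-1)}$, and (ii) checking the convergence/parameter conditions ($\alpha,\beta>-1$, $\xi\notin[-1,1]$ so the double pole is genuinely outside the interval) so that the Appell integral representation is valid; the rest is matching of parameters.
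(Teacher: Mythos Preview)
Your overall strategy---reduce the integrals to standard special-function forms---is sound, but it diverges from the paper's argument in two places, and in the Jacobi case your route will not produce the specific $J_1+J_2$ expressions recorded in the table.

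First, the paper does \emph{not} compute $\widetilde\mu_0$ directly. Instead it observes that $\widetilde\mu_2=\int_I (x-\xi)^2\widehat W(x)\,dx$ is trivial, because the factor $(x-\xi)^2$ cancels the denominator of $\widehat W$ and leaves a pure classical Gamma (Laguerre) or Beta (Jacobi) integral. One then feeds $\widetilde\mu_1$ and $\widetilde\mu_2$ into the recursion formula of Theorem~\ref{theo: Recursion formula} at $k=0$ to read off $\widetilde\mu_0$. This bypasses the double-pole integral entirely and is what gives the clean closed forms in Table~\ref{tab-mus}; your plan to attack $\widetilde\mu_0$ head-on (differentiating a one-parameter family to handle the $(x-\xi)^{-2}$) would work in principle but leads to derivatives of incomplete Gamma or Appell functions that do not collapse to the tabulated expressions without further identities.

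Second, for the Jacobi $\widetilde\mu_1$ your single affine substitution $x=1-2u$ does not yield the Appell $F_1$ in the statement. After that change of variable the factors $(1-x)^\alpha$ and $(1+x)^\beta$ become $u^\alpha$ and $(1-u)^\beta$, which play the roles of $t^{a-1}$ and $(1-t)^{c-a-1}$ in the Euler integral; only the single pole factor $(x-\xi)^{-1}$ remains as a $(1-yt)^{-b}$ term, so you would land on a ${}_2F_1$, not the $F_1$ with two variable arguments that defines $J_1$ and $J_2$. The paper instead splits $\int_{-1}^1 = \int_0^1 + \int_{-1}^0$ and reflects the second piece by $x\mapsto -x$; on each half-interval the integration variable itself is $t$, so $a=1$, the factor $(1-x)^\alpha$ supplies $(1-t)^{c-a-1}$, and the \emph{two} remaining factors $(1+x)^\beta$ and $((\beta-\alpha)x-(\alpha+\beta))^{-1}$ furnish the pair $(1-xt)^{-b_1}(1-yt)^{-b_2}$ with $b_1=-\beta$, $b_2=1$ and arguments $-1$, $(\beta-\alpha)/(\alpha+\beta)$. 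That split is precisely the origin of the $J_1$, $J_2$ notation. Relatedly, your parameter matching misidentifies which factors become Appell arguments, and the exceptional root is $\xi=(\alpha+\beta)/(\beta-\alpha)$, not its reciprocal.
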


\footnotesize
	\begin{table}
		\begin{tabular}{cc|c|c|}
			\cline{3-4}
			&&$\widetilde \mu_0$ & $\widetilde \mu_1$\\
			\hline
			\multicolumn{1}{ |c  }{\multirow{2}{*}{Exceptional Laguerre} } &
			\multicolumn{1}{ |c| }{I } & $\Gamma(\alpha)-2e^\alpha\alpha^\alpha\Gamma(1+\alpha)\Gamma(-\alpha,\alpha)$ & $e^\alpha\alpha^\alpha\Gamma(1+\alpha)\Gamma(-\alpha, \alpha)$\\ 
			\cline{2-4}
			\multicolumn{1}{ |c| }{}& \multicolumn{1}{ |c| }{II } &$\Gamma(\alpha)-2e^\alpha\alpha^\alpha\Gamma(1+\alpha)\Gamma(-\alpha,\alpha)$ & $e^\alpha\alpha^\alpha\Gamma(1+\alpha)\Gamma(-\alpha, \alpha)$\\ 
			\cline{2-4}
			\multicolumn{1}{ |c| }{}	& \multicolumn{1}{ |c| }{III } & $\frac{-\Gamma(\alpha+1)}{\alpha}$ & $e^{-\alpha}(-\alpha)^\alpha \Gamma(1+\alpha)\Gamma(-\alpha, -\alpha)$ \\
			\hline
			\multicolumn{1}{ |c| }{Exceptional Jacobi}& &$\frac{\Gamma(\alpha+1)\Gamma(\beta+1)(\alpha+\beta)}{2\alpha\beta\Gamma(\alpha+\beta+2)} + 
		\frac{(J_1+J_2)(2\alpha\beta-\alpha-\beta)}{\alpha\beta}$ & $\frac{4}{\beta-\alpha}(J_1+J_2)$\\
			\hline
		\end{tabular}
		
		\
		
		\
		
		\caption{The values of the initial moments $\widetilde\mu_0$ and $\widetilde\mu_1$.}
		\label{tab-mus}
	\end{table}
\normalsize

In the case of the Type I Laguerre polynomial family, the proof has been published in \cite[Theorem 4.1]{Liaw-Osborn}.  We will prove that the moments given in Table \ref{tab-mus} are correct for the Type III $X_1$-Laguerre and $X_1$-Jacobi polynomial families.  The proof for the Type II $X_1$-Laguerre family follows in analogy to the other Laguerre types.  

\begin{proof}[Type III $X_1$-Laguerre Proof.]
We prove that the expressions given in Table \ref{tab-mus} are the moments $\widetilde \mu_0$ and $\widetilde \mu_1$ associated with the Type III $X_1$-Laguerre expression.  Set \[E_a(x)=\int_1^\infty e^{-xt} t^{-a}\,dt\,,\quad \mbox{ where } \quad x>0\,.\] Recalling the definitions of $\Gamma(x)$ and $\Gamma(a,x)$, these functions and the exponential integral function, $E_a$, are related via \[E_a(x)=x^{a-1}\Gamma(1-a, x)\,.\]  In addition,
\[
(a-1)E_a(x)=e^{-x}-xE_{a-1}(x)\,.
\]  

Following a chain of manipulations, which may be found in \cite{Liaw-Osborn}, we note the following relation: \[\int_0^\infty \frac{e^{-x}x^\beta}{x-\alpha}dx= e^{-\alpha} E_{1+\beta}(-\alpha)\Gamma(1+\beta)\,,\quad \mbox{ for } \alpha>0\,,\; \beta>-1\,.\]  

In consequence, we obtain the following expression for the first adjusted moment, $\widetilde \mu_1=\int_0^\infty \frac{x^\alpha e^{-x}}{x-\alpha}\,dx$:
\begin{align*}
	\widetilde \mu_1& = e^{-\alpha} E_{1+\alpha}(-\alpha) \Gamma(1+\alpha)\\
	& = e^{-\alpha}(-\alpha)^\alpha \Gamma(-\alpha, -\alpha) \Gamma(1+\alpha)\,.
\end{align*}

Notice that \[\widetilde \mu_2=\int_0^\infty (x-\alpha)^2\frac{x^\alpha e^{-x}}{(x-\alpha)^2}\,dx=\int_0^\infty x^\alpha e^{-x}\,dx=\Gamma(\alpha+1)\,.\] 

To finish the proof, we set $k=0$, use the recursion formula Table \ref{tab-recursion}, and solve for $\widetilde\mu_0$ to show $\widetilde \mu_0=\frac{-\Gamma(\alpha+1)}{\alpha}$.
\end{proof}

\begin{proof}[$X_1$-Jacobi Proof.]
We prove that the expressions given in Table \ref{tab-mus} are the moments $\widetilde \mu_0$ and $\widetilde \mu_1$ associated with the $X_1$-Jacobi expression.  Per \eqref{eq: d-moments}, 
   \[ \widetilde \mu_1:=\int\ci{I} (x-\xi) \widehat W(x) dx. \]

For the $X_1$-Jacobi system, $\xi = \frac{\alpha+\beta}{\beta-\alpha}$, and
   \[\widehat W_1^{\alpha, \beta}(x)=\frac{(1-x)^\alpha (1+x)^\beta}{\left(P_1^{(-\alpha-1,\beta-1)}(x)\right)^2}\,.\]

Substituting these into the equation for $\widetilde \mu_1$, along with the classical Jacobi polynomial in the denominator of the weight function, we find that 
   \[ \widetilde \mu_1=\frac{4}{\beta-\alpha}\int_{-1}^1 \frac{(1-x)^{\alpha}(1+x)^{\beta}}{(\beta-\alpha)x-\alpha-\beta} dx. \]

We recast this integral as the sum of two integrals, so that now
   \[ \widetilde \mu_1=\frac{4}{\beta-\alpha}[J_1+J_2],  \]
where
   \[ J_1 = \int_0^1 \frac{(1-x)^{\alpha}(1+x)^{\beta}} {(\beta-\alpha)x-\alpha-\beta} dx,  \]
and
   \[ J_2 = \int_0^1 \frac{(1-x)^{\beta}(1+x)^{\alpha}} {(\alpha-\beta)x-\alpha-\beta} dx. \]
   
To obtain the value of these integrals, we note that the first Appell hypergeometric series has an integral representation if two of its parameters meet certain restrictions.  Namely, we find
   \[ F_1(a,b_1,b_2,c;x,y) = \frac{\Gamma(c)}{\Gamma(a)\Gamma(c-a)} \int_0^1 t^{a-1}(1-t)^{c-a-1}(1-xt)^{-b_1}(1-yt)^{-b_2} dt \]
for Re $c >$ Re $a > 0,$ see \cite[Chapter 9]{bailey}.

Observe that setting $a = 1, b_1 = -\beta, b_2 = 1, c = \alpha + 2, x = -1,$ and $y = \frac{\beta-\alpha}{\alpha+\beta}$ allows us to equate $J_1$ with the Appell series.  These values also satisfy the restrictions for $a$ and $c$.  Similarly, we set $a = 1, b_1 = -\alpha, b_2 = 1, c = \beta + 2, x = -1,$ and $y = \frac{\alpha-\beta}{\alpha+\beta}$ to equate $J_2$ with the Appell series.

This results in the following values:
\[ J_1 = \frac{-1}{(\alpha+1)(\alpha+\beta)} F_1 \left( 1, -\beta, 1, \alpha+2; -1, \frac{\beta-\alpha}{\alpha+\beta} \right), \]
and
\[ J_2 = \frac{-1}{(\beta+1)(\alpha+\beta)} F_1 \left( 1, -\alpha, 1, \beta+2; -1, \frac{\alpha-\beta}{\alpha+\beta} \right). \]

As mentioned above, $\widetilde \mu_1=\frac{4}{\beta-\alpha}[J_1+J_2]$.

We calculate $\widetilde \mu_0$ indirectly by first finding $\widetilde \mu_2$, an easier computation.  Again, per equation \eqref{eq: d-moments},
   \[ \widetilde \mu_2:=\int\ci{I} (x-\xi)^2 \widehat W(x) dx. \]

Substituting the expressions for $\xi$ and the exceptional weight, we find that
   \[ \widetilde \mu_2=\frac{4}{(\beta-\alpha)^2} \int_{-1}^1 (1-x)^{\alpha}(1+x)^{\beta} dx. \]

A standard table of integrals informs us that
   \[ \int_a^b (x-a)^m (b-x)^n dx = (b-a)^{m+n+1} \; \frac{\Gamma(m+1)\Gamma(n+1)}{\Gamma(m+n+2)}, \]

for $m,n > -1$, and $b > a.$

Setting $a = -1, b = 1, m = \beta,$ and $n = \alpha$ allows us to compute the value of $\widetilde \mu_2$.  The restriction on $m$ and $n$ matches those for the Jacobi parameters $\alpha$ and $\beta$:
   \[ \widetilde \mu_2= \frac {4 \, \Gamma(\alpha+1)\Gamma(\beta+1)} {(\beta-\alpha)^2 \, \Gamma(\alpha+\beta+2)}. \]
   
Finally, using the recursion formula given in Table \ref{tab-recursion} for the $X_1$-Jacobi system, and setting $k=0$, we calculate $\widetilde \mu_0$.  After substituting in the expression for $\xi$ in terms of $\alpha$ and $\beta$, and further simplifications, we obtain the following value:
   \[ \widetilde \mu_0 = \frac {(\alpha+\beta)\Gamma(\alpha+1)\Gamma(\beta+1)} {2\alpha\beta\Gamma(\alpha+\beta+2)} + \frac {(2\alpha\beta-\alpha-\beta)(J_1+J_2)} {\alpha\beta}. \]
This concludes the proof of Lemma \ref{lem-tableentries}.
\end{proof}

\end{document}